\providecommand\@dotsep{5}
\def\listtodoname{List of Todos}
\def\listoftodos{\@starttoc{tdo}\listtodoname}
\numberwithin{equation}{section}
\newtheorem{theorem}{Theorem}[section]
\newtheorem{proposition}[theorem]{Proposition}
\newtheorem{lemma}[theorem]{Lemma}
\begin{document}

\title[Multiplicity of solutions for a class of quasilinear...]
{Multiplicity of solutions for a class of quasilinear problems involving the $1$-Laplacian operator with critical growth}
\author{Claudianor O. Alves, Anass Ourraoui and  Marcos T. O. Pimenta }
\address[Claudianor O. Alves ]
{\newline\indent Unidade Acad\^emica de Matem\'atica
\newline\indent
Universidade Federal de Campina Grande
\newline\indent
58429-970, Campina Grande - PB, Brazil
\newline\indent e-mail:coalves@mat.ufcg.edu.br}

\address[Anass Ourraoui  ]
{\newline\indent Department of Mathematics, FSO,
\newline\indent University of Mohamed I, Morocco,
\newline\indent e-mail:a.ourraoui@gmail.com }

\address[ Marcos T. O. Pimenta  ]
{\newline\indent  Departamento de Matem\'atica e Computa\c{c}\~ao,
\newline\indent  Universidade Estadual Paulista (Unesp), Faculdade de Ci\^encias e Tecnologia
\newline\indent  19060-900 - Presidente Prudente - SP, Brazil \\
\newline\indent e-mail:marcos.pimenta@unesp.br }

\pretolerance10000


\begin{abstract}
The aim of this paper is to establish two results about multiplicity of solutions to problems involving the $1-$Laplacian operator, with nonlinearities with critical growth. To be more specific, we study the following problem
$$
\left\{	\begin{array}{l}
	-	\Delta_1 u +\xi \frac{u}{|u|} =\lambda |u|^{q-2}u+|u|^{1^*-2}u, \quad\text{in }\Omega,\\
		u=0, \quad\text{on } \partial\Omega.
	\end{array}
\right.
$$
where $\Omega$ is a smooth bounded domain in $\mathbb{R}^N$, $N \geq 2$ and $\xi \in\{0,1\}$. Moreover, $\lambda > 0$, $q \in (1,1^*)$ and $1^*=\frac{N}{N-1}$. The first main result establishes the existence of many rotationally non-equivalent and nonradial solutions by assuming that $\xi=1$, $\Omega = \{x \in \mathbb{R}^N\,:\,r < |x| < r+1\}$, $N\geq 2$, $N \not = 3$ and $r > 0$. In the second one, $\Omega$ is a smooth bounded domain, $\xi=0$, and the multiplicity of solutions is proved through an abstract result which involves genus theory for functionals which are sum of a $C^1$ functional with a convex lower semicontinuous functional.
\end{abstract}

\thanks{ C. O. Alves was partially
supported by  CNPq/Brazil 304804/2017-7. M. T. O. Pimenta was partially supported by FAPESP 2019/14330-9, CNPq/Brazil 303788/2018-6 and FAPDF }
\subjclass[2010]{1-Laplacian operator; Functions of bounded variation, Varitional Methods}
\keywords{35J62, 26A45, 35J20}

\maketitle

\section{Introduction}
In this work we are concerned with the existence of multiple solutions for the following class of problem
\begin{equation} \label{P0}
	\left\{	\begin{array}{l}
	-	\Delta_1 u + \xi \frac{u}{|u|}=\lambda |u|^{q-2}u+|u|^{1^*-2}u, \quad\text{in }\Omega,\\
		u=0, \quad\text{on } \partial\Omega,
	\end{array}
	\right.
\end{equation}
where $\Omega$ is a smooth bounded domain in $\mathbb{R}^N,$ $q \in (1,1^*)$, $\xi \in \{0,1\}$, $\lambda>0$ and $1^*=\frac{N}{N-1}$ for $N \geq 2$. 

Problem (\ref{P0}) looks as the formal limit, as $p\to1^+$, of
\begin{equation} \label{Pm}
	\left\{	\begin{array}{l}
	-	\Delta_p u + \xi |u|^{p-2}u= \lambda |u|^{q-2}u+|u|^{p^*-2}u, \quad\text{in }\Omega,\\
		u=0, \quad\text{on } \partial\Omega,
	\end{array}
	\right.
\end{equation}
where $p^*=\frac{Np}{N-p}$ for $N \geq p$. 

The interest in this sort of problem has started with the celebrated paper of Br\'ezis and Nirenberg \cite{Brez}, in which the authors proved that, for $p =q= 2$ and $\xi=0$, \eqref{Pm} admits a positive solution for every $\lambda\in(0,\lambda_1)$ and $N \geq 4$. Later, this result has been extended for $p > 1$ by Egnell \cite{Egne}, Garc\'ia Azorero and Peral Alonso \cite{GarciaPeral} and Gueda and Veron \cite{GuedaVeron}.

As far as works involving the $1-$Laplacian operator are regarded, some of the pioneering works involving this operator were written by Andreu, Ballesteler, Caselles and Maz\'on in a series of papers (among them \cite{Andreu,AndreuBallesterCasellesMazon,AndreuBallesterCasellesMazonDIE}), which gave rise to the monograph \cite{AndreuCasellesMazonMonograph}. Indeed, in \cite{Andreu}, the authors characterize the imprecise quotient $\displaystyle \frac{Du}{|Du|}$ (when $Du$ is just a Radon measure, rather than an $L^1$ function), through the Pairing Theory of Anzellotti (see \cite{AnzellottiAMPA1983} and also \cite{AndreuCasellesMazonMonograph}). This theory allows them to introduce a vector field ${\textbf z} \in L^\infty(\Omega,\mathbb{R}^N)$ which plays the role of  $\displaystyle \frac{Du}{|Du|}$. Among the very first works on this issue we could also cite the works of Kawohl \cite{Kawohl} and also Demengel \cite{Demengel1}, where in the later, the author used the symmetry of the domain to get nodal solutions to problems involving the $1-$Laplacian operator and a nonlinearity with critical growth.

In \cite{DegiovanniMagrone},  Degiovanni and Magrone  studied the version of the Br\'ezis-Nirenberg problem to the 1-Laplacian operator, by applying a linking theorem. In that work, for compactness issues, they worked with an extension of the energy functional to the Lebesgue space $L^{1^*}(\Omega)$.

In \cite{FigueiredoPimenta1}, Figueiredo and Pimenta studied a problem related to (\ref{P0}), where the nonlinearity has a subcritical growth. In their main result, an approach based on the Nehari method has been developed in order to obtain ground-state solutions.

Regarding quasilinear problems of this type, the natural space to deal with it is the space of functions of bounded variation, $BV(\Omega)$. More specifically, when dealing with this sort of problems through variational methods, some difficulties related to the Palais-Smale condition arise. Moreover, other ones related to the lack of smoothness of the energy functional and to the lack of reflexiveness of $BV(\Omega)$, arise as well. 

In this work, we exploit some facts and ideas from the above papers, especially from \cite{Demengel1} to show the existence of multiple
nontrivial solutions to (\ref{P0}). Our goal is twofold. First, we establish the existence of many rotationally
non-equivalent and nonradial solutions for the above problem  (\ref{P0}) with $\xi=1$, involving a nonlinearity with
critical growth in the case when 
\begin{equation} \label{omegar}
\Omega=\Omega_r=\{x \in \mathbb{R}^N\,:\,r < |x| < r+1\}.
\end{equation}
Afterwards, we study (\ref{P0}) by assuming that $\Omega$ is a smooth bounded domain in $\mathbb{R}^N$ and $\xi=0$. In this case, we prove the existence of multiple many solutions, by applying a version of an abstract result in \cite{Szulkin}.

The existence of many rotationally non-equivalent and nonradial solutions was considered in some problems involving the Laplacian operator. In  Br\'ezis and Nirenberg \cite{Brez},  it was proved the existence of a non-radial positive solution for the following problem
\begin{equation} \label{GuidasNN}
\left\{	\begin{array}{l}
	-	\Delta u+u-u^{p}=0, \quad\text{in $D$,}\\
	u=0, \quad\text{on } \partial D,
\end{array}
\right.	
\end{equation} 	
where 
$$
D=\{x \in \mathbb{R}^N:\, r < |x| < r+d\}
$$
for some $d>0$. In \cite{Coffman}, Coffman proved that, if $p > 1$ and $N = 2$ or $1 < p < N/(N-2)$ and $N \geq  3$, the number
of nonradial and rotationally non-equivalent positive solutions of (\ref{GuidasNN}), tends
to $+\infty$ as $r \to +\infty$.

Motivated by the above papers, some authors have studied this class of problems.
For the subcritical case, we can mention the papers of  Li \cite{Li}, Lin \cite{Lin}, Suzuki \cite{Suzuki} and references therein. Related to the critical case, Wang and Willem \cite{WangWillem} proved the
existence of multiple solutions for the following problem 
\begin{equation} \label{WW}
	\left\{	\begin{array}{l}
		-	\Delta u=\lambda u+u^{2^*-1}, \quad\text{in } \Omega_r,\\
		u=0, \quad\text{on } \partial \Omega_r,
	\end{array}
	\right.	
\end{equation} 
where $\Omega_r$ is given in (\ref{omegar}). The authors proved that
for $0 < \lambda < \pi^2$ and $n \in \mathbb{N}$, there exists $R(\lambda, n)$ such that for $r > R(\lambda, n)$, (\ref{WW}) has at least $n$ nonradial and rotationally non-equivalent solutions.
Inspired by \cite{WangWillem}, de Figueiredo and Miyagaki \cite{DjairoOlimpio} considered
the following problem
\begin{equation} \label{FO}
	\left\{	\begin{array}{l}
		-	\Delta u=f(|x|,u)+u^{2^*-1}, \quad\text{in } \Omega_r,\\
		u=0, \quad\text{on } \partial \Omega_r,
	\end{array}
	\right.	
\end{equation} 
where $f$ is a $C^1$ function with subcritical growth.

Still related to this class of problem, we would like to cite the papers of Alves and de Freitas  \cite{AlvesFreitas}, 
Byeon \cite{Byeon}, Castro and Finan \cite{CAstroFinan}, Catrina and Wang \cite{CatrinaWang},
Mizoguchi and Suzuki \cite{MIzoguchiSuzuki}, Hirano and Mizoguchi \cite{HiranoMizoguchi} and references
therein.

Motivated by the works previously mentioned and more precisely, by \cite{DjairoOlimpio}, \cite{DegiovanniMagrone} and \cite{WangWillem}, our first main result is the following.

\begin{theorem} \label{T1}
	For each $n\in \mathbb{N}$ there is $r_0>0$ and $\lambda_0>0$ such that for all $\lambda \geq\lambda_0$ and $r\geq r_0$, \eqref{P0} for $\xi=1$ has at least $n$ nonradial and rotationally 	non-equivalent solutions.
\end{theorem}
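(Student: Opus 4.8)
The plan is to transplant the symmetric minimax scheme of Wang--Willem \cite{WangWillem} and de Figueiredo--Miyagaki \cite{DjairoOlimpio} into the nonsmooth $BV$ framework used for the $1$--Laplacian in \cite{Andreu,DegiovanniMagrone}. The first step is to fix the energy functional associated with \eqref{P0} for $\xi=1$ on $BV(\Omega_r)$,
\[
I(u)=\int_{\Omega_r}|Du|+\int_{\partial\Omega_r}|u|\,d\mathcal{H}^{N-1}+\int_{\Omega_r}|u|\,dx-\frac{\lambda}{q}\int_{\Omega_r}|u|^{q}\,dx-\frac{1}{1^*}\int_{\Omega_r}|u|^{1^*}\,dx,
\]
and to write $I=\Phi-\Psi$, where $\Phi$ (total variation $+$ Dirichlet boundary term $+$ $L^1$ norm) is convex, lower semicontinuous and positively $1$--homogeneous, while $\Psi\in C^1$. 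Because $BV(\Omega_r)$ is nonreflexive and $\Phi$ is nondifferentiable, I would interpret critical points in the weak sense of the Anzellotti pairing (equivalently, through the Szulkin-type calculus alluded to in the abstract), so that a critical point of $I$ yields a field $\mathbf{z}\in L^\infty$ playing the role of $Du/|Du|$ and solving \eqref{P0}.

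Next I would break the rotational symmetry. Using the splitting $\mathbb{R}^N=\mathbb{R}^2\times\mathbb{R}^{N-2}$, for each admissible integer $k$ I introduce a compact subgroup $G_k\le O(N)$ built from the rotation of angle $2\pi/k$ in the $\mathbb{R}^2$--factor together with the action of $O(N-2)$ on the complementary factor, and set $X_k=\{u\in BV(\Omega_r):u\circ g=u\ \text{for all}\ g\in G_k\}$. Since every $g\in G_k$ is an isometry preserving $\Omega_r$ and leaving each term of $\Phi$ and each nonlinear term invariant, the principle of symmetric criticality (in its convex/nonsmooth form) guarantees that critical points of $I|_{X_k}$ are genuine solutions of \eqref{P0}. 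The point of passing to $X_k$ is twofold: it restores compactness and it will force the solutions to be nonradial.

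The analytic heart is a concentration--compactness analysis, in the spirit of Lions adapted to $BV$, for the restricted functional. In $BV$ the extremals of the embedding $BV\hookrightarrow L^{1^*}$ are characteristic functions of balls, so the noncompactness of a Palais--Smale sequence is due to concentration of the critical mass along $G_k$--orbits of points of $\Omega_r$. Since a $G_k$--orbit has cardinality (or measure) growing with $k$, a concentrating sequence must carry at least that many copies of the optimal bubble, so the local Palais--Smale condition holds below a threshold $c_k^*$ with $c_k^*\to\infty$ as $k\to\infty$; this is where the orbit structure, and hence the exclusion of the degenerate dimension $N=3$ (for which $O(N-2)=O(1)$ is too small to control orbits off the $\mathbb{R}^2$--plane while simultaneously forcing nonradiality), enters. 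I would then produce, for each such $k$, a solution $u_k$ of mountain-pass type in $X_k$ whose energy lies strictly below $c_k^*$. Here the role of the hypotheses is decisive: taking $\lambda\ge\lambda_0$ large makes the subcritical term $\tfrac{\lambda}{q}\|u\|_q^q$ dominant and pushes the minimax value below the compactness threshold (the Br\'ezis--Nirenberg mechanism), while taking $r\ge r_0$ large provides the geometric room in the annular sectors needed both for the energy estimate with $BV$ characteristic-function test functions and for the nonradiality argument.

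Finally I would establish nonradiality and rotational inequivalence. Choosing $n$ distinct admissible symmetry types $G_{k_1},\dots,G_{k_n}$ and comparing energies across sectors shows each $u_{k_j}$ is nonradial for $r$ large, and a solution with isotropy $G_{k_i}$ cannot be carried by a rigid rotation onto one with isotropy $G_{k_j}$ when $i\ne j$, so the $n$ solutions are pairwise rotationally non-equivalent. I expect the main obstacle to be the below-threshold energy estimate: unlike the classical case one cannot use smooth Talenti bubbles, and must instead estimate $I$ on $G_k$--symmetric superpositions of characteristic functions of small balls, carefully controlling the boundary trace term $\int_{\partial\Omega_r}|u|\,d\mathcal{H}^{N-1}$, the mutual interaction of the bubbles, and the nonsmoothness of $\Phi$, in order to certify that the minimax level stays below $c_k^*$.
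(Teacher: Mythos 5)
Your scaffolding coincides with the paper's: the same functional on $BV(\Omega_r)$, the same symmetric subspaces $BV_{G_k}(\Omega_r)$ with $G_k=O_k\times O(N-2)$, the nonsmooth principle of symmetric criticality (the paper invokes \cite{Squassina}), large $\lambda$ to push levels below a compactness threshold, and large $r$ to force nonradiality. But the two steps on which the multiplicity actually rests are different in the paper, and as you state them they contain genuine gaps. First, your compactness mechanism --- a local Palais--Smale condition below a threshold $c_k^*\to\infty$ because ``a $G_k$-concentrating sequence must carry at least $k$ copies of the optimal bubble'' --- is unjustified here: for $N\geq 4$ the $G_k$-orbits of points off the $\mathbb{R}^2$-plane are continua (spheres coming from the $O(N-2)$ factor), so no finite bubble count follows from invariance alone, and you supply no $BV$ concentration--compactness analysis respecting the group action. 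The paper needs none of this: it works with a \emph{fixed}, $k$-independent threshold $\frac{1}{2N}S^N$. If a minimizing/(PS) sequence on the Nehari set vanished, extending by zero to $BV(\mathbb{R}^N)$ (this is exactly where $\xi=1$ matters, since the $L^1$ term completes the $BV(\mathbb{R}^N)$ norm) and using the definition of $S$ forces the level up to $\frac{1}{N}S^N$, a contradiction; this gives attainment of $J_{\lambda,k,r}$ below the fixed threshold (Lemma \ref{Estimativa0}). The $k$-dependence is instead absorbed into $\lambda_k^*$: the level estimate of Lemma \ref{Estimativa1} scales like $k\,\lambda^{-1/(q-1)}$, so taking $\lambda$ large \emph{depending on} $k$ pushes $J_{\lambda,k,r}$ below $\frac{1}{2N}S^N$, while Lemma \ref{Estimativa2} shows $J_{\lambda,\infty,r}\geq\frac{1}{2N}S^N$ for $r\geq r_0$, using the compact embedding (\ref{EMB2}) of $BV_{G_\infty}(\mathbb{R}^N)$ into $L^t(\mathbb{R}^N)$, $t\in(1,1^*)$. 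Incidentally, this is also the true source of the restriction $N\neq 3$ (for $N=3$ the factor $O(N-2)$ is trivial and the relevant compactness fails), not the nonradiality bookkeeping you conjecture.

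Second, and more seriously, your inequivalence argument is wrong as stated: a minimizer obtained in $X_{k_i}$ is only known to be \emph{at least} $G_{k_i}$-invariant; its isotropy group may be strictly larger (it could a priori be $G_{k_i m}$-invariant, or even radial), so ``a solution with isotropy $G_{k_i}$ cannot be rotated onto one with isotropy $G_{k_j}$'' does not follow from the construction. The paper closes precisely this gap with its key unnumbered lemma: the strict monotonicity $J_{\lambda,k,r}<J_{\lambda,km,r}$ for $m\geq 2$, proved by taking smooth approximations (via intermediate convergence) of the minimizer at level $J_{\lambda,km,r}$, applying the angular dilation $v(\theta,\rho,|y|)=\varphi(\theta/m,\rho,|y|)$, and extracting a strictly positive gain $\sigma>0$ from the angular derivative term --- the alternative $\sigma=0$ would force the limit to be radial, contradicting $J_{\lambda,km,r}<J_{\lambda,\infty,r}$. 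Since $I_\lambda$ is rotation-invariant, solutions at the distinct levels $J_{\lambda,2,r}<J_{\lambda,2^2,r}<\cdots<J_{\lambda,2^n,r}<J_{\lambda,\infty,r}$ cannot be rotationally equivalent, and all are nonradial because they lie below the radial level. Without this monotonicity lemma (or a substitute for it), your scheme produces $n$ solutions that could all coincide up to rotation.
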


In what follows, according to Degiovanni and Magrone \cite{DegiovanniMagrone} and  Kawohl and Schuricht \cite{Buttazzo}, we say that $u \in BV(\Omega_r)$ is a solution to (\ref{P1}) if
there are $z \in L^\infty(\Omega_r,\mathbb{R}^N)$ and $\gamma \in L^\infty(\Omega_r,\mathbb{R})$ such that

\begin{equation}
	\left\{
	\begin{array}{l}
		|z|_\infty \leq 1, \quad \mbox{div} z \in L^{N}(\Omega_r), \quad -\displaystyle \int_{\Omega_r}u\mbox{div}z dx = \int_{\Omega_r}|Du|+\int_{\partial\Omega_r}|u|\,d \mathcal{H}^{N-1},\\
		\mbox{}\\
		|\gamma|_\infty \leq 1, \quad  \gamma|u| = u \quad \mbox{a.e. in} \quad \Omega_r,\\
		\mbox{} \\
		-\mbox{div} z + \gamma = \lambda |u|^{q-2}u+|u|^{1^*-2}u, \quad \mbox{a.e. in} \quad \Omega_r.
	\end{array}
	\right.
	\label{eulerlagrangeequation}
\end{equation}

Our second main result was motivated by the study made in Wei and Wu \cite{WeiWu}, where the authors showed the existence of multiple solution for the following class of problems involving the $p$-Laplacian operator
\begin{equation} \label{WeiWu}
	\left\{	\begin{array}{l}
		-	\Delta_p u = f(x,u)+\lambda |u|^{p^*-2}u, \quad\text{in }\Omega,\\
		u=0, \quad\text{on } \partial\Omega,
	\end{array}
	\right.
\end{equation}
where $\Omega$ is a smooth bounded domain, $\lambda$ is a positive parameter and $f$ is continuous, with subcritical growth. In this work, the authors used a version of an abstract theorem due to Ambrosetti and Rabinowitz \cite{AmbrosettiRabinowitz} which involves the genus theory for $C^{1}$ even functionals. Their main result proves that given $n \in \mathbb{N}$, there is $\lambda_*=\lambda_*(n)>0$ such that problem (\ref{WeiWu}) has at least $n$ nontrivial solutions for $\lambda \in (0, \lambda_*)$. In \cite{SilvaXavier}, Silva and Xavier improved the main results proved in \cite{WeiWu}.

Our main second result has the following statement.
\begin{theorem} 
\label{T2}
Given $n \in\mathbb{N}$, there is $\lambda_n>0$ such that \eqref{P0} for $\xi=0$  has at least $n$ nontrivial solutions for $\lambda \geq \lambda_n$.	
\end{theorem}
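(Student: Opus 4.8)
The plan is to obtain the solutions of \eqref{P0} with $\xi=0$ as critical points, in the sense of Szulkin, of the energy functional $I_\lambda:BV(\Omega)\to\mathbb{R}$,
\begin{equation*}
I_\lambda(u)=\Psi(u)+\Phi(u),\qquad \Psi(u)=\int_\Omega|Du|+\int_{\partial\Omega}|u|\,d\mathcal{H}^{N-1},\quad \Phi(u)=-\frac{\lambda}{q}\int_\Omega|u|^q\,dx-\frac{1}{1^*}\int_\Omega|u|^{1^*}\,dx,
\end{equation*}
where $\Psi$ is convex, proper and lower semicontinuous (the natural relaxation of the total variation that encodes the Dirichlet datum), while $\Phi$ is of class $C^1$ on $BV(\Omega)$ through the continuous embeddings $BV(\Omega)\hookrightarrow L^q(\Omega)$ and $BV(\Omega)\hookrightarrow L^{1^*}(\Omega)$. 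First I would record that $I_\lambda$ is even with $I_\lambda(0)=0$, and that a critical point $u$ in the Szulkin sense produces, through the Anzellotti pairing and the subdifferential of $\Psi$, a field $z\in L^\infty(\Omega,\mathbb{R}^N)$ with $|z|_\infty\le 1$, $-\mathrm{div}\,z=\lambda|u|^{q-2}u+|u|^{1^*-2}u$ and the correct boundary pairing, i.e. a solution in the sense of \eqref{eulerlagrangeequation} with $\gamma\equiv 0$. This places the problem inside the even-functional framework of \cite{Szulkin} for sums of a $C^1$ functional with a convex lower semicontinuous one.

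The second, and hardest, step is a local Palais--Smale condition: $I_\lambda$ satisfies $(PS)_c$ for every $c\in\bigl(0,\tfrac1N S^{N}\bigr)$, where $S$ is the best constant in $BV(\mathbb{R}^N)\hookrightarrow L^{1^*}(\mathbb{R}^N)$. I would first show that any $(PS)_c$ sequence is bounded in $BV(\Omega)$, exploiting $1<q<1^*$ as in the Ambrosetti--Rabinowitz scheme, so that, up to a subsequence, $u_m\stackrel{*}{\rightharpoonup}u$ in $BV(\Omega)$ and $u_m\to u$ in $L^s(\Omega)$ for $s<1^*$. The delicate point is the critical exponent: since $BV(\Omega)\hookrightarrow L^{1^*}(\Omega)$ is not compact, I would run a concentration--compactness analysis for the measures $|Du_m|$ and $|u_m|^{1^*}$ to rule out any loss of mass below the threshold $\tfrac1N S^{N}$, whence $u_m\to u$ strongly and $u$ is a critical point.

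With compactness available below $\tfrac1N S^{N}$, I would verify the geometric hypotheses of the abstract even theorem. Near the origin the degree-one homogeneity of $\Psi$ dominates the nonlinearities, so the Sobolev inequalities give $I_\lambda(u)\ge \|u\|_{BV}\bigl(1-C_1\lambda\|u\|_{BV}^{q-1}-C_2\|u\|_{BV}^{1^*-1}\bigr)$, yielding $\rho_\lambda,\alpha_\lambda>0$ with $I_\lambda\ge\alpha_\lambda$ on $\{\|u\|_{BV}=\rho_\lambda\}$; in the opposite direction $I_\lambda(tu)\to-\infty$ as $t\to+\infty$ for fixed $u\neq 0$, since $1^*>q>1$. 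For each $k\le n$ I would fix a $k$-dimensional symmetric subspace $W_k\subset C_c^\infty(\Omega)$ and, by linking $W_k$ with the sphere $\{\|u\|_{BV}=\rho_\lambda\}$, form the associated genus min--max value $c_k\ge\alpha_\lambda>0$. The crucial quantitative input is an upper bound: on the finite-dimensional sphere $W_k\cap\{\|u\|_{BV}=1\}$ the $L^q$- and $L^{1^*}$-norms are bounded below away from zero, so a one-variable computation shows $\max_{t\ge 0}I_\lambda(tu)\to 0$ uniformly on $W_k\cap\{\|u\|_{BV}=1\}$ as $\lambda\to+\infty$, and therefore $c_k\to 0$ as $\lambda\to+\infty$.

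Combining the two estimates, for each $n$ there is $\lambda_n>0$ such that, for all $\lambda\ge\lambda_n$, the $n$ min--max levels obey $0<c_1\le\cdots\le c_n<\tfrac1N S^{N}$. On this range $(PS)_{c_k}$ holds, so the even version of the critical point theorem of \cite{Szulkin} applies and produces $n$ critical values in $\bigl(0,\tfrac1N S^{N}\bigr)$; whenever two of them coincide, the genus estimate forces a set of critical points of genus at least two, so in all cases one obtains at least $n$ nontrivial solutions, which is the assertion of Theorem \ref{T2}. The step I expect to absorb most of the work is the local Palais--Smale condition below $\tfrac1N S^{N}$: the concentration--compactness analysis must be performed in the non-reflexive space $BV(\Omega)$ and made compatible with the Anzellotti field $z$, rather than by the usual testing with $u_m$, which is unavailable because $\Psi$ is not differentiable.
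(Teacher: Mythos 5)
Your overall architecture matches the paper's: decompose $I_\lambda$ into a convex lower semicontinuous part plus a $C^1$ part, apply the even min--max scheme with genus from \cite{Szulkin} (Theorem \ref{AmRbthm} in the paper), prove a local Palais--Smale condition below $\frac{1}{N}S^N$, and push the levels under that threshold for large $\lambda$ via the one-variable estimate $\max_{t\geq 0}\{a_nt-d_n\lambda t^q\}=C_n\lambda^{-1/(q-1)}\to 0$, which is exactly the mechanism of Lemma \ref{estimacn}. But there is one genuine flaw in your setup: you define $I_\lambda$ on $X=BV(\Omega)$, whereas the paper, following \cite{DegiovanniMagrone}, works on $X=L^{1^*}(\Omega)$ with $f_0$ extended by $+\infty$ off $BV(\Omega)$. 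This is not cosmetic. Szulkin's theory requires the $(PS)$ condition with respect to the norm of the ambient space, and norm convergence of $(PS)$ sequences in $BV(\Omega)$ is unattainable: bounded sequences converge only weakly-$*$, the total variation is merely lower semicontinuous, and at best one gets intermediate convergence ($u_k\to u$ in $L^1$ with $\int_\Omega|Du_k|\to\int_\Omega|Du|$), never $\|u_k-u\|_{BV}\to 0$ in general. Tellingly, your sketch ends with ``$u_m\to u$ strongly'' without naming the topology; the only topology in which this is provable is $L^{1^*}(\Omega)$, and that is precisely why the paper takes $L^{1^*}(\Omega)$ as the ambient space. Your argument needs this correction before Theorem \ref{AmRbthm} can be invoked at all.

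The second divergence is the compactness mechanism, where your route is different but plausible. You propose a concentration--compactness analysis of the measures $|Du_m|$ and $|u_m|^{1^*}$; the paper avoids this entirely. It first derives from the Euler identity that $\lim_k|u_k|_{1^*}^{1^*}\leq Nc<S^N$ (Lemma \ref{Pscondition}), then uses the truncations $T_h$, $R_h$ of \cite{DegiovanniMagrone}: the convergence $f_0(u_k)-|u_k|_{1^*}^{1^*}\to f_0(u)-|u|_{1^*}^{1^*}$ and its $R_h$-analogue (Lemma \ref{ThRh}, whose proof rests on the subdifferential identity $\int_\Omega w_ku_k\,dx=\int_\Omega|Du_k|+\int_{\partial\Omega}|u_k|\,d\mathcal{H}^{N-1}$ from \cite{Kawohl}, the substitute for ``testing with $u_m$'' that you correctly identify as unavailable), combined with $\bigl(S-|R_h(u_k)|_{1^*}^{1^*-1}\bigr)|R_h(u_k)|_{1^*}\leq f_0(R_h(u_k))-|R_h(u_k)|_{1^*}^{1^*}$, yield $\limsup_k|R_h(u_k)|_{1^*}<\epsilon$ and hence strong $L^{1^*}$ convergence. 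A Lions-type argument along your lines should also close (each atom costs at least $\frac{1}{N}S^N$, and $I_\lambda(u)\geq 0$ at critical points since $1<q<1^*$), but you have left precisely this hardest step as a plan, and carrying it out in the non-reflexive $BV$ setting is considerably more delicate than the paper's truncation argument, which is elementary and already adapted to the nonsmooth framework.
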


We would like to point out that in the proof of Theorems \ref{T1} and \ref{T2} we cannot use the classical variational methods for $C^1$ functionals, since problems involving the 1-Laplacian operator have energy functionals which are not $C^1$. This, in turn, brings a lot of difficulties for deal with the problem. In order to overcome this difficulty, we use the minimax methods developed by Szulkin in\cite{Szulkin}, which works well for functionals that can be written as the sum of a $C^1$ functional with a convex lower semicontinuous one. Finally, we would like to point out that $\xi=1$ in Theorem \ref{T1} is very important, because in our approach it was necessary to work with some sequences in $BV(\mathbb{R}^N)$ (see the proof of Lemma \ref{Estimativa2}). 

Before concluding this introduction, for those readers interested in problems involving the 1-Laplacian operator, we would like to cite Alves \cite{Alves0,Alves1},  Alves and Pimenta \cite{AlvesPimenta}, Alves, Figueiredo and Pimenta \cite{AlvesFigueiredoPimenta}, Bellettini, Caselles and Novaga \cite{BCN}, Chang \cite{Chang2}, Demengel \cite{Demengel2}, Figueiredo and  Pimenta \cite{FigueiredoPimenta3, FigueiredoPimenta4},   Mercaldo,  Rossi, Segura de Le\'on and Trombetti \cite{MRST}, Mercaldo, Segura de Le\'on and Trombetti \cite{MST}, Molino Salas and Segura de Le\'on \cite{sergio}, Ortiz Chata and Pimenta \cite{Pimenta2}.

\section{Existence of nonradial solutions}

In this section, we prove Theorem \ref{T1}, which establishes the existence of many rotationally
non-equivalent and nonradial solutions for the following problem
\begin{equation} \label{P1}
	\left\{	\begin{array}{l}
	-	\Delta_1 u +\frac{u}{|u|}=\lambda |u|^{q-2}u+|u|^{1^*-2}u, \quad\text{in }\Omega_r,\\
		u=0, \quad\text{on } \partial\Omega_r,
	\end{array}
	\right.
\end{equation}
where $\Omega_r=\{x \in \mathbb{R}^N\,:\,r < |x| < r+1\}, N\geq 2$ and $N\not=3$, $r>0, \lambda>0$, $q \in (1,1^*)$ and $1^*=\frac{N}{N-1}$.

Associated with problem (\ref{P1}), we have the energy functional  $I_\lambda:BV(\Omega_r) \to \mathbb{R}$ given by
$$
I_\lambda(u)=\int_{\Omega_r} |D u|\,dx+\int_{\Omega_r}|u|\,dx+\int_{\partial\Omega_r}|u| \,d\mathcal{H}^{N-1}-\frac{\lambda}{q}\int_{\Omega_r}|u|^q\,dx-\frac{1}{1^*}\int_{\Omega_r}|u|^{1^*}\,dx.
$$

In the sequel, we say that $u \in BV(\Omega_r)$ is a solution of (\ref{P1}) if $0 \in \partial I_\lambda(u)$, where $\partial I_\lambda(u)$ denotes the generalized gradient of $I_\lambda$ in $u$, as defined in \cite{Chang}. It is possible to prove that $0 \in \partial I_\lambda(u)$ if, and only if,
\begin{equation}
	\|w\|_r- \|u\|_r \geq \int_{\Omega_r}(\lambda|u|^{q-2}u+|u|^{1^*-2}u)(w - u)dx, \quad \forall w\in BV(\Omega_r),
	\label{eqsolution}
\end{equation}
where $BV(\Omega_r)$ denotes the space of functions of bounded variation. 

We say that $u \in BV(\Omega_r)$, or $u$ is a function of bounded variation, if $u \in L^1(\Omega_r)$ and its distributional derivative $Du$ is a vectorial Radon measure, i.e.,
$$
BV(\Omega_r) = \left\{u \in L^1(\Omega_r); \, Du \in \mathcal{M}(\Omega_r,\mathbb{R}^N)\right\}.
$$
It can be proved that $u \in BV(\Omega_r)$ is equivalent to $u \in L^1(\Omega_r)$ and
$$
\int_{\Omega_r} |Du| := \sup\left\{\int_{\Omega_r} u \mbox{div}\phi dx; \, \, \phi \in C^1_c(\Omega_r,\mathbb{R}^N), \, \mbox{s.t.} \, \, |\phi|_\infty \leq 1\right\} < +\infty.
$$
The space $BV(\Omega_r)$ is a Banach space endowed with the norm
\begin{equation} \label{norma1}
\|u\|:= \int_{\Omega_r} |Du| + |u|_{L^{1}(\Omega_r)}.
\end{equation}
Moreover, the Sobolev embeddings hold also for this space and its embedding into $L^r(\Omega)$ is continuous for all $\displaystyle r \in \left[1,1^*\right]$ and compact for  $r \in [1,1^*)$.

In this section, we will consider the following norm on $BV(\Omega_r)$,
$$
\|u\|_r=\int_{\Omega_r}|Du|\,dx+\int_{\Omega_r}|u|\,dx+\int_{\partial\Omega_r}|u|\,d \mathcal{H}^{N-1},
$$
which is equivalent to the norm (\ref{norma1}), where $\mathcal{H}^{N-1}$ denotes the $(N-1)$-dimensional Hausdorff measure.

As one can see in \cite{Buttazzo}, the space $BV(\Omega_r)$ has different convergence and density properties when compared with the usual Sobolev spaces. For instance, $C^\infty(\overline{\Omega_r})$ is not dense in $BV(\Omega_r)$ with respect to the strong convergence. However, there is a weaker sense of convergence in $BV(\Omega_r)$, called {\it intermediate convergence} (or {\it strict convergence}), which makes $C^\infty(\overline{\Omega_r})$ dense on it. We say that $(u_n) \subset BV(\Omega_r)$ converges to $u \in BV(\Omega_r)$ in the sense of the intermediate convergence if
$$
u_n \to u \quad \mbox{in} \quad L^1(\Omega_r)
$$
and
$$
\int_{\Omega_r}|Du_n| \to \int_{\Omega_r}|Du|. 
$$

In what follows, $O(N)$ denotes the group of $N \times N$ orthogonal matrices. For
any integer $k \geq 1$, let us consider the finite rotational subgroup $O_k$ of $O(2)$ given
by
$$
O_k=\left\{g\in O_2:~~g(x)=\Big( x_1 \cos {\frac{2\pi l}{k}} +x_2 \sin{ \frac{2\pi l}{k}},-x_1 \sin {\frac{2\pi l}{k}}+x_2 \cos {\frac{2\pi l} {k}}\Big)\right\}.
$$
where $x=(x_1,x_2) \in \mathbb{R}^2$ and $l \in \{0,1,...,k-1\}$. We also consider  the subgroups of $O(N)$
$$
G_k=O_k\times O(N-2),~~1\leq k<\infty
$$
and
$$
G_\infty=O(N).
$$

Now related to the above subgroups, we set the subspaces
$$
BV_{G_k}(\Omega_r)=\left\{u\in BV(\Omega_r):~~u(x)=u(g^{-1}(x)),~~for~~all~~g\in G_k\right\},
$$
endowed with the norm $\|\cdot \|_r$.

From the compact embedding involving the space $BV(\Omega_r)$, it follows that the embedding
\begin{equation} \label{EMB1}
BV_{G_k}(\Omega_r) \hookrightarrow L^{t}(\Omega_r), \quad t \in [1,1^*)
\end{equation}
is compact for $1\leq k < +\infty$ and 
\begin{equation} \label{EMB1*}
	BV_{G_\infty}(\Omega_r) \hookrightarrow L^{t}(\Omega_r), \quad t \in [1,+\infty)
\end{equation}
is compact, see \cite[Lemma 2.1]{AlvesTahir}.

 Moreover, Figueiredo and Pimenta \cite{FigueiredoPimenta2} proves that the embedding
\begin{equation} \label{EMB2}
BV_{G_\infty}(\mathbb{R}^N) \hookrightarrow L^{t}(\mathbb{R}^N), \quad t \in (1,1^*)
\end{equation}
is compact as well.

In the sequel, for each $1 \leq k \leq \infty$, $J_{\lambda,k,r}$ denotes the following real numbers
$$
J_{\lambda,k,r}=\inf_{\mathcal{N}_{k,r}}I_{\lambda},
$$
with
$$
\mathcal{N}_{k,r}=\{BV_{G_k}(\Omega_r) \setminus \{0\} ;~~E_\lambda(u)=0\},
$$
where 
\begin{equation} \label{Elambda}
E_\lambda(u)=\int_{\Omega_r} |D u|\,dx+\int_{\Omega_r}|u|\,dx+\int_{\partial\Omega_r}|u| \,d\mathcal{H}^{N-1}-{\lambda}\int_{\Omega_r}|u|^q\,dx-\int_{\Omega_r}|u|^{1^*}\,dx.
\end{equation} 

The set $\mathcal{N}_{k,r}$ is called Nehari set associated with $I_\lambda$ (see \cite{FigueiredoPimenta1} for a detailed description of this set). It is possible to prove that $J_{\lambda,k,r}$ is the mountain pass levels associated with $I_\lambda$ on $BV_{G_k}(\Omega_r)$ and $BV_{G_\infty}(\Omega_r)$, respectively. Hence, there is a $(PS)$ sequence $(u_n)$ associated to $J_{\lambda,k,r}$, i.e.,
\begin{equation} \label{MP01}
I_\lambda(u_n) \to  J_{\lambda,k,r}
\end{equation}
and
\begin{equation*} \label{MP02}
\|v\|_r-\|u_n\|_r \geq \int_{\Omega_r}(\lambda |u_n|^{q-2}u_n+|u_n|^{1^*-2}u)(v-u_n)\,dx-\tau_n \|v-u_n\|_r, \quad \forall v \in  BV_{G_k}(\Omega_r).
\end{equation*}
The last inequality implies that
\begin{equation} \label{MP3}
\|u_n\|_r=\int_{\Omega_r}(\lambda |u_n|^{q}+|u_n|^{1^*})\,dx+o_n(1)\|u_n\|_r.
\end{equation}

\begin{lemma} \label{boundedness} The sequence $(u_n)$ is bounded.
	
\end{lemma}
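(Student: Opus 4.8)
The plan is to derive the bound purely from the mountain-pass level information \eqref{MP01} together with the Palais--Smale identity \eqref{MP3}, combining them in the spirit of the Ambrosetti--Rabinowitz argument, but with a multiplier adapted to the fact that the principal part of $I_\lambda$ is $1$-homogeneous. Write $t_n=\|u_n\|_r$, $A_n=\int_{\Omega_r}|u_n|^q\,dx$ and $B_n=\int_{\Omega_r}|u_n|^{1^*}\,dx$; these are all finite and nonnegative because $BV(\Omega_r)$ embeds continuously into $L^q(\Omega_r)$ and $L^{1^*}(\Omega_r)$. With this notation $I_\lambda(u_n)=t_n-\frac{\lambda}{q}A_n-\frac{1}{1^*}B_n$, while \eqref{MP3} reads $t_n-\lambda A_n-B_n=o_n(1)t_n$.

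I would then form the linear combination
\[
I_\lambda(u_n)-\frac{1}{q}\bigl(t_n-\lambda A_n-B_n\bigr)=\Bigl(1-\tfrac{1}{q}\Bigr)t_n+\Bigl(\tfrac{1}{q}-\tfrac{1}{1^*}\Bigr)B_n,
\]
where the $A_n$-terms cancel exactly. The key point is the choice of the multiplier $\tfrac1q$: since the principal term $\|\cdot\|_r$ has homogeneity $1<q<1^*$, this is precisely the factor that makes both surviving coefficients strictly positive (whereas the naive choice $\tfrac{1}{1^*}$ would leave $A_n$ with an unfavorable sign). Using \eqref{MP01} and \eqref{MP3}, the left-hand side equals $J_{\lambda,k,r}+o_n(1)+o_n(1)t_n$, so discarding the nonnegative term $\bigl(\tfrac1q-\tfrac1{1^*}\bigr)B_n$ gives
\[
\Bigl(1-\tfrac{1}{q}\Bigr)t_n\le J_{\lambda,k,r}+o_n(1)+o_n(1)t_n.
\]
Absorbing the $o_n(1)t_n$ into the left-hand side for $n$ large (which is legitimate because $1-\tfrac1q>0$) yields a uniform bound on $t_n=\|u_n\|_r$, and hence boundedness in $BV(\Omega_r)$.

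The computation is entirely routine once the multiplier is fixed, so I do not expect a serious obstacle here. The only genuine point requiring care is that $I_\lambda$ is not $C^1$, so one cannot simply test with $I_\lambda'(u_n)u_n$; instead the identity \eqref{MP3} must be read off from the variational inequality for the Palais--Smale sequence (taking $v=2u_n$ and $v=0$ in that inequality). With that identity in hand, the essential idea is just to pick the multiplier $1/q$ so that the critical term $B_n$ enters with a favorable sign and can be thrown away.
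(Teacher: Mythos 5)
Your proposal is correct and follows essentially the same route as the paper: both form $I_\lambda(u_n)-\frac{1}{q}\bigl(\|u_n\|_r-\lambda\int_{\Omega_r}|u_n|^q\,dx-\int_{\Omega_r}|u_n|^{1^*}\,dx\bigr)$ using \eqref{MP01} and \eqref{MP3}, so that the $q$-terms cancel, the coefficient $\bigl(\frac{1}{q}-\frac{1}{1^*}\bigr)$ of the critical term is nonnegative and can be discarded, and the $o_n(1)\|u_n\|_r$ remainder is absorbed into $\frac{q-1}{q}\|u_n\|_r$ for $n$ large. Your additional remark on how \eqref{MP3} is extracted from the variational inequality is consistent with the paper's setup and introduces no discrepancy.
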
	
\begin{proof} From (\ref{MP01}) and (\ref{MP3}),
$$
J_{\lambda,k,r}+o_n(1)=I_\lambda(u_n)=I_\lambda(u_n)-\frac{1}{q}\|u_n\|_r+\frac{1}{q}\int_{\Omega_r}(\lambda |u_n|^{q}+|u_n|^{1^*})\,dx+o_n(1)\|u_n\|_r.
$$	
Then
$$
J_{\lambda,k,r}+o_n(1) \geq \frac{(q-1)}{q}\|u_n\|_r-o_n(1)\|u_n\|_r\geq \frac{(q-1)}{2q}\|u_n\|_r
$$
for $n$ large enough, showing the boundedness of the sequence.
\end{proof}	

\begin{lemma} \label{eta}
For each $\lambda>0$ fixed, there is $\eta=\eta(\lambda)>0$ that is independent of $k$ and $r>0$ such that $J_{\lambda,k,r}\geq \eta $ for $1\leq k\leq \infty$.
\end{lemma}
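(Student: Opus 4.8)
The plan is to exploit the Nehari constraint to rewrite the functional in a manifestly positive form, and then to bound the norm of elements of $\mathcal{N}_{k,r}$ from below by a constant that does not see the domain. First I would use $E_\lambda(u)=0$ on $\mathcal{N}_{k,r}$, that is $\|u\|_r=\lambda\int_{\Omega_r}|u|^q\,dx+\int_{\Omega_r}|u|^{1^*}\,dx$, and substitute it into $I_\lambda$ to get
$$
I_\lambda(u)=\lambda\Big(1-\tfrac1q\Big)\int_{\Omega_r}|u|^q\,dx+\Big(1-\tfrac{1}{1^*}\Big)\int_{\Omega_r}|u|^{1^*}\,dx\geq \mu\,\|u\|_r,
$$
where $\mu=\min\{1-\tfrac1q,\,1-\tfrac1{1^*}\}>0$, the last inequality again using $E_\lambda(u)=0$ to recover $\|u\|_r$. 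Thus everything reduces to producing $\rho>0$, independent of $k$ and $r$, with $\|u\|_r\geq\rho$ on $\mathcal{N}_{k,r}$; then $\eta:=\mu\rho$ works.

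The crucial point is that the Sobolev constant entering the estimate be domain-independent. To this end I would extend $u$ by zero outside $\Omega_r$: the extension $\bar u$ lies in $BV(\mathbb{R}^N)$ and satisfies $\int_{\mathbb{R}^N}|D\bar u|=\int_{\Omega_r}|Du|+\int_{\partial\Omega_r}|u|\,d\mathcal{H}^{N-1}$, the trace term appearing exactly as the jump of $\bar u$ across $\partial\Omega_r$. The global Sobolev inequality in $\mathbb{R}^N$ then yields $\|u\|_{L^{1^*}(\Omega_r)}=\|\bar u\|_{L^{1^*}(\mathbb{R}^N)}\leq S^{-1}\int_{\mathbb{R}^N}|D\bar u|\leq S^{-1}\|u\|_r$, with $S$ the dimensional isoperimetric constant, hence independent of $r$ and $k$. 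For the subcritical term I would deliberately avoid the embedding $BV(\Omega_r)\hookrightarrow L^q(\Omega_r)$, whose constant degenerates as $|\Omega_r|\to\infty$, and instead interpolate: writing $\tfrac1q=(1-\theta)+\tfrac{\theta}{1^*}$ with $\theta=\tfrac{1-1/q}{1-1/1^*}\in(0,1)$, Hölder gives $\|u\|_{L^q}\leq\|u\|_{L^1}^{1-\theta}\|u\|_{L^{1^*}}^{\theta}$ with constant $1$; since $\|u\|_{L^1(\Omega_r)}\leq\|u\|_r$ and $\|u\|_{L^{1^*}}\leq S^{-1}\|u\|_r$, this produces $\int_{\Omega_r}|u|^q\,dx\leq S^{-\theta q}\|u\|_r^{q}$ with a universal constant.

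Putting $a=\|u\|_r$ and inserting the two bounds into the Nehari identity gives $a\leq \lambda S^{-\theta q}a^{q}+S^{-1^*}a^{1^*}$, whence $1\leq \lambda S^{-\theta q}a^{q-1}+S^{-1^*}a^{1^*-1}$. Since $q-1>0$ and $1^*-1>0$, the right-hand side tends to $0$ as $a\to0^+$, so there is $\rho>0$ depending only on $\lambda$, $q$, $N$ and $S$ (never on $k$ or $r$) with $\|u\|_r\geq\rho$; feeding this into the first display gives $I_\lambda(u)\geq\mu\rho=:\eta$ for every $u\in\mathcal{N}_{k,r}$ and every $1\leq k\leq\infty$. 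The main obstacle is exactly the uniformity in $r$: the naive route through $BV(\Omega_r)\hookrightarrow L^q(\Omega_r)$ fails because $|\Omega_r|$ is unbounded, and the remedy is to pass the subcritical term only through $L^1$ and $L^{1^*}$, both of which are controlled by $\|u\|_r$ with constants that see only the dimension.
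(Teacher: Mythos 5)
Your proposal is correct and follows essentially the same route as the paper: extend $u$ by zero to $\mathbb{R}^N$ (picking up the boundary trace in $\int_{\mathbb{R}^N}|D\bar u|$), use the domain-independent embeddings of $BV(\mathbb{R}^N)$ into $L^q$ and $L^{1^*}$ to get $1\leq C(\lambda\|u\|_r^{q-1}+\|u\|_r^{1^*-1})$ and hence a uniform lower bound $\|u\|_r\geq\rho$, then conclude via the Nehari identity $I_\lambda(u)=\lambda(1-\tfrac1q)\int|u|^q+(1-\tfrac1{1^*})\int|u|^{1^*}\geq\tfrac{q-1}{q}\|u\|_r$. The only cosmetic difference is that you make explicit the $L^1$--$L^{1^*}$ interpolation behind the full-space $L^q$ embedding, which the paper invokes directly as its inequality (2.10).
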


\begin{proof}
For each $u \in \mathcal{N}_{k,r}$ we have that
\begin{equation} \label{EQ1}
\|u\|_r=\lambda \int_{\Omega_r}|u|^q\,dx+ \int_{\Omega_r}|u|^{1^*}\,dx.
\end{equation}
In what follows, we define the function $\tilde{u}:\mathbb{R}^N \to \mathbb{R}$ given by
\begin{equation} \label{tilde}
\tilde{u}(x)=
\left\{
\begin{array}{l}
u(x), \quad x \in \Omega_r, \\
\mbox{}\\
0, \quad x \in \Omega_r^c.
\end{array}
\right.
\end{equation}
Hence,
$$
\int_{\Omega_r}|u|^q\,dx=\int_{\mathbb{R}^N}|\tilde{u}|^q\,dx, \quad \int_{\Omega_r}|{u}|^{1^*}\,dx=\int_{\mathbb{R}^N}|\tilde{u}|^{1^*}\,dx
$$
and by properties of $BV(\mathbb{R}^N)$, $\tilde{u} \in BV(\mathbb{R}^N)$ and
$$
 \int_{\mathbb{R}^N}|D\tilde{u}| = \int_{\Omega_r} |D u|\,dx+\int_{\partial\Omega_r}|u| \,d\mathcal{H}^{N-1}.
$$
The definition of $\tilde{u}$ combined with (\ref{EQ1}) gives
\begin{equation} \label{EQ2}
 \int_{\mathbb{R}^N}|D\tilde{u}|+ \int_{\mathbb{R}^N}|\tilde{u}|\,dx = \lambda \int_{\mathbb{R}^N}|\tilde{u}|^q\,dx+ \int_{\mathbb{R}^N}|\tilde{u}|^{1^*}\,dx.
\end{equation}
The function $\|\cdot\|:BV(\mathbb{R}^N) \to \mathbb{R}$ given by
\begin{equation} \label{EQ3}
\|w\|= \int_{\mathbb{R}^N}|D w|+ |w|_{L^{1}(\mathbb{R}^N)}.
\end{equation}
is a norm in $BV(\mathbb{R}^N)$. Moreover, there are positive constants $C_1,C_2>0$ such that
\begin{equation} \label{EQ4}
\|w\|_{L^{q}(\mathbb{R}^N)}\leq C_1\|w\| \quad \mbox{and} \quad \|w\|_{L^{1^*}(\mathbb{R}^N)}\leq C_2\|w\|, \quad \forall w \in BV(\mathbb{R}^N).
\end{equation}
From (\ref{EQ2})-(\ref{EQ4}), there is $C_3>0$ such that
$$
1 \leq C_3(\lambda\|\tilde{u}\|^{q-1}+\|\tilde{u}\|^{1^*-1}).
$$
Therefore, there is $\eta_1=\eta_1(\lambda)>0$ such that
$$
\|\tilde{u}\| \geq \eta_1,
$$
and so,
$$
\|u\|_r \geq \eta_1, \quad \forall u \in \mathcal{N}_{k,r}.
$$
From (\ref{EQ1}),
$$
\lambda\int_{\mathbb{R}^N}|u|^q\,dx+\int_{\mathbb{R}^N}|u|^{1^*}\,dx \geq \eta_1, \quad \forall u \in \mathcal{N}_{k,r},
$$
then,
$$
I_\lambda(u)=\frac{\lambda(q-1)}{q}\int_{\Omega_r}|u|^q\,dx+\frac{(1^*-q)}{q1^*}\int_{\Omega_r}|u|^{1^*}\,dx\ \geq \frac{(q-1)}{q}\eta_1,
$$
showing the result.
\end{proof}

Hereafter, $S$ denotes the following constant
\begin{equation} \label{S}
	S=
	\inf_{ {\footnotesize{
				\begin{array}{l}
					u \in BV(\mathbb{R}^N) \\
					u \not=0
	\end{array}}}}
	\frac{ \int_{\mathbb{R}^N}|D u|}{|u|_{{L^{1^*}(\mathbb{R}^N)}.}}.
\end{equation}

\begin{lemma} \label{Estimativa1}
For each  $1 \leq k <\infty$, there is $\lambda_k^* > 0$  such that
$$
J_{\lambda,k,r}<\frac{1}{2N}S^N, \quad \mbox{for all} \quad \lambda \geq \lambda_k^*.
$$
\end{lemma}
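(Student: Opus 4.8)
The plan is to bound $J_{\lambda,k,r}$ from above by evaluating $I_\lambda$ along a single ray through a fixed $G_k$-invariant test function, and to exploit that for large $\lambda$ the subcritical term $\lambda\int_{\Omega_r}|u|^q$ drives the energy down to zero, hence below the fixed positive number $\tfrac{1}{2N}S^N$. No delicate Aubin--Talenti concentration estimate is needed here: it is precisely the largeness of $\lambda$, not a fine bubble analysis, that forces the level down.

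First I would record the fibering structure of $I_\lambda$ along rays. Fix $w\in BV_{G_k}(\Omega_r)\setminus\{0\}$ and set $g(t)=I_\lambda(tw)$ for $t>0$. Using the positive $1$-homogeneity of $\|\cdot\|_r$ together with $\int_{\Omega_r}|tw|^q=t^q\int_{\Omega_r}|w|^q$ and $\int_{\Omega_r}|tw|^{1^*}=t^{1^*}\int_{\Omega_r}|w|^{1^*}$, one gets $g(t)=t\|w\|_r-\tfrac{\lambda}{q}t^q\int_{\Omega_r}|w|^q-\tfrac{1}{1^*}t^{1^*}\int_{\Omega_r}|w|^{1^*}$ and the identity $E_\lambda(tw)=t\,g'(t)$. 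Since $1<q<1^*$, we have $g(0)=0$, $g'(0^+)=\|w\|_r>0$, and $g'$ is strictly decreasing on $(0,\infty)$ (both subtracted terms are strictly increasing in $t$); hence $g$ has a unique critical point $t_w>0$, which is its global maximum and satisfies $E_\lambda(t_w w)=0$. Thus $t_w w\in\mathcal N_{k,r}$ and $J_{\lambda,k,r}\le I_\lambda(t_w w)$.

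Next I would show $t_w\to 0$ and $I_\lambda(t_w w)\to 0$ as $\lambda\to\infty$. From the Nehari relation $g'(t_w)=0$, i.e. $\|w\|_r=\lambda t_w^{q-1}\int_{\Omega_r}|w|^q+t_w^{1^*-1}\int_{\Omega_r}|w|^{1^*}$, dropping the last nonnegative term gives $t_w^{q-1}\le \|w\|_r/\big(\lambda\int_{\Omega_r}|w|^q\big)$, so $t_w\to 0$. Since the subcritical and critical terms in $g$ are nonnegative, $J_{\lambda,k,r}\le I_\lambda(t_w w)=g(t_w)\le t_w\|w\|_r\le C\lambda^{-1/(q-1)}$, where $C$ depends only on $\|w\|_r$ and $\int_{\Omega_r}|w|^q$. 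As $\tfrac{1}{2N}S^N>0$ is a fixed constant, this produces $\lambda_k^*>0$ with $J_{\lambda,k,r}<\tfrac{1}{2N}S^N$ for all $\lambda\ge\lambda_k^*$.

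The one point requiring care is the choice of test function and the uniformity of $\lambda_k^*$ in $r$. For a fixed $r$ any nonzero $G_k$-invariant $w$ works (for instance $w\equiv 1$, which is even $G_\infty$-invariant). To keep $\lambda_k^*$ independent of $r$, I would instead take $w=\sum_{l=0}^{k-1}\psi(\,\cdot\,-p_l)$, where $\psi$ is a fixed radial bump supported in a ball of radius $<\tfrac12$ and $p_0,\dots,p_{k-1}$ is the $O_k$-orbit of a point $\big((r+\tfrac12)\cos\theta_0,(r+\tfrac12)\sin\theta_0,0,\dots,0\big)$ on the middle circle of the annulus; each translate $\psi(\,\cdot\,-p_l)$ is automatically $O(N-2)$-invariant because the center has vanishing last $N-2$ coordinates, so $w\in BV_{G_k}(\Omega_r)$. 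For $r$ large the translates have pairwise disjoint support, whence $\|w\|_r$, $\int_{\Omega_r}|w|^q$ and $\int_{\Omega_r}|w|^{1^*}$ equal $k$ times the fixed, $r$-independent contribution of $\psi$; the constant $C$ above is then uniform in $r$, which is the main technical step to verify.
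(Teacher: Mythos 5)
Your proof is correct and takes essentially the same route as the paper's: both project a $G_k$-invariant sum of $k$ disjointly supported bumps along the $O_k$-orbit onto the Nehari set, drop the critical term, and maximize $t\|w\|_r-\frac{\lambda t^q}{q}\int_{\Omega_r}|w|^q\,dx$ to get $J_{\lambda,k,r}\le C\,k\,\lambda^{-1/(q-1)}\to 0$. The only cosmetic difference is that the paper shrinks the bump radius $\sigma$ with $k$ so that the rotated supports are disjoint for \emph{every} $r>0$, whereas your fixed radius $<\tfrac12$ requires $r$ large --- harmless for the application (and fixable by taking the radius of order $\sin(\pi/k)$).
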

\begin{proof}
	From the fact that $\Omega_r$ is an open bounded domain, we may choose $\sigma>0$ that is independent of $r$, such that the ball $B_\sigma={B}_\sigma\Big(\frac{2r+1}{2},0,...,0\Big)\subset \Omega_r$ verifies
	$$ g^i {B}_\sigma\cap g^j {B}_\sigma=\emptyset,~~\mbox{for} ~~g^i\in G_k,~~i \neq j,~i,j=   0,...,k-1.$$
	
	Let us choose $ \omega\in C_0^{\infty}({B}_\sigma)\setminus\{0 \} $ and define $v:=\Sigma_{g\in G_k}g\omega\in~ {BV}_{G_k}(\Omega_r)\backslash\{0\}.$ A simple computation implies that
	$$
	I'_\lambda(tv)tv>0 \quad \mbox{for} \quad t \approx 0^+ \quad \mbox{and} \quad I'_\lambda(tv)tv\to -\infty, \quad \mbox{as} \quad t\to\infty.
	$$
Hence, there exists $t_v>0$ such that $t_v v\in \mathcal{N}_{k,r}.$ From this,
$$
J_{\lambda,k,r}\leq I_{\lambda}(t_v v) \leq kI_{\lambda}(t_v\omega)\leq k\max_{t\geq0}I_{\lambda}(t\omega).
$$
and so,	
$$
 J_{\lambda,k,r}\leq k\max_{t \geq 0}\Big\{ t\|\omega\|_r-\frac{\lambda t^q}{q}\int_{{B}_\sigma}|\omega|^q\,dx\Big\}.
$$
Putting $g(t)=t\|\omega\|_r-\frac{\lambda t^q}{q} |\omega|_q^{q},$ this function attains its maximum at
$$
t_0=\Big(\frac{\|\omega\|_r}{\lambda |\omega|_{q}^{q}}\Big)^{\frac{1}{q-1}}.
$$
Therefore,
$$
J_{\lambda,k,r}\leq k\frac{(q-1)}{q}\left(\frac{\|\omega\|_r}{|\omega|_q}\right)^{\frac{q}{q-1}}\lambda^{\frac{1}{1-q}}.
$$
Taking $\lambda_k^*> \left[\frac{2Nk(q-1)}{Sq}\right]^{q-1}\left(\frac{\|\omega\|_r}{|\omega|_q}\right)^q$ the proof is achieved.
\end{proof}



\begin{lemma} \label{Estimativa0}
For each $1\leq k \leq \infty$ the number  $J_{\lambda,k,r}$ is attained for $\lambda\geq\lambda_k^*$.
\end{lemma}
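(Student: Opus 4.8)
The plan is to realize $J_{\lambda,k,r}$ as the energy of a minimizer extracted from the bounded $(PS)$ sequence $(u_n)$ at the level $J_{\lambda,k,r}$ (its boundedness being guaranteed by Lemma~\ref{boundedness}). Passing to a subsequence, I would obtain a weak-$*$ limit $u_n \overset{*}{\rightharpoonup} u$ in $BV(\Omega_r)$ together with $u_n \to u$ a.e. The decisive structural point is the difference between the two ranges of $k$: for $1 \le k < \infty$ the compact embedding \eqref{EMB1} only provides strong convergence in $L^t(\Omega_r)$ for $t \in [1,1^*)$, so the critical term may lose mass at infinity, whereas for $k=\infty$ the stronger embedding \eqref{EMB1*} yields strong convergence up to $t=1^*$ and removes the compactness issue outright.

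I would treat $k=\infty$ first. Here $\int_{\Omega_r}|u_n|^q \to \int_{\Omega_r}|u|^q$ and $\int_{\Omega_r}|u_n|^{1^*} \to \int_{\Omega_r}|u|^{1^*}$, so passing to the limit in \eqref{MP3} and using that $\|\cdot\|_r$ is lower semicontinuous under weak-$*$ convergence (which I would justify through the zero-extension $\tilde u$ of Lemma~\ref{eta}, for which $\|u\|_r = \int_{\mathbb R^N}|D\tilde u| + |\tilde u|_{L^1}$) gives $E_\lambda(u) \le 0$. The uniform estimate in Lemma~\ref{eta} forces $\lambda\int_{\Omega_r}|u|^q + \int_{\Omega_r}|u|^{1^*} \ge \eta_1 > 0$, hence $u \ne 0$; a fibering argument then supplies a unique $t_u \in (0,1]$ with $t_u u \in \mathcal N_{\infty,r}$, and combining lower semicontinuity with the fact that $u_n$ maximizes $t \mapsto I_\lambda(t u_n)$ yields $J_{\lambda,\infty,r} \le I_\lambda(t_u u) \le \liminf_n I_\lambda(u_n) = J_{\lambda,\infty,r}$. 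Thus $t_u u$ attains the infimum.

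For the critical regime $1 \le k < \infty$ the only missing ingredient is the strong convergence $\int_{\Omega_r}|u_n|^{1^*}\to\int_{\Omega_r}|u|^{1^*}$, and recovering it is the heart --- and the main obstacle --- of the proof. I would run a concentration-compactness analysis adapted to $BV(\mathbb R^N)$: extending by zero and setting $|D\tilde u_n| \overset{*}{\rightharpoonup}\mu$ and $|\tilde u_n|^{1^*}\overset{*}{\rightharpoonup}\nu$, the second concentration-compactness lemma produces at most countably many atoms $x_j$ with $\nu(\{x_j\})=\nu_j$, $\mu(\{x_j\})=\mu_j$ and $S\,\nu_j^{1/1^*}\le\mu_j$. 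The crux is to exclude every atom: a local Nehari/testing identity at $x_j$ gives $\mu_j=\nu_j$, whence $\nu_j^{1/N} \ge S$, i.e. $\nu_j \ge S^N$; since each surviving atom contributes $\tfrac1N\nu_j \ge \tfrac1N S^N$ to the limit energy, the presence of a single atom would force $J_{\lambda,k,r} \ge \tfrac1N S^N > \tfrac1{2N}S^N$, contradicting the strict upper bound of Lemma~\ref{Estimativa1} (valid precisely for $\lambda\ge\lambda_k^*$). Consequently no concentration occurs, the critical term converges strongly, and the argument of the $k=\infty$ case applies verbatim to conclude that $J_{\lambda,k,r}$ is attained.

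I expect the genuine difficulty to lie in setting up the concentration-compactness dichotomy in the non-reflexive space $BV$ and, above all, in justifying the local identity $\mu_j=\nu_j$ from the Szulkin-type $(PS)$ inequality rather than from a classical smooth Euler--Lagrange equation; the subcritical term and the fibering/projection step are routine by comparison.
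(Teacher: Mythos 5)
Your proposal is correct in outline but takes a genuinely different, and considerably heavier, route than the paper for the case $1\le k<\infty$ (your $k=\infty$ case matches the paper's, which simply declares that case immediate from the compact embedding (\ref{EMB1*})). The paper never establishes strong $L^{1^*}$ convergence and runs no concentration--compactness analysis at all. Instead it rules out $u\equiv 0$ by a single global dichotomy: if $u=0$, the subcritical term vanishes by (\ref{EMB1}), the approximate Nehari identity (\ref{MP3}) forces $\lim\|u_n\|_r=\lim\int_{\Omega_r}|u_n|^{1^*}\,dx=L$ with $L\ge S^N$ via the definition (\ref{S}) applied to the zero extensions $\tilde u_n$, whence $J_{\lambda,k,r}=\frac{L}{N}\ge\frac{1}{N}S^N$, contradicting Lemma \ref{Estimativa1}. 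Then, invoking the closedness of the subdifferential graph as in Degiovanni--Magrone, the paper shows the weak limit $u$ itself satisfies the Nehari identity, so $u\in\mathcal{N}_{k,r}$; attainment follows because on $\mathcal{N}_{k,r}$ one has $I_\lambda(u)=\frac{\lambda(q-1)}{q}\int_{\Omega_r}|u|^q\,dx+\frac{1}{N}\int_{\Omega_r}|u|^{1^*}\,dx$, a positive combination handled by compactness for the $q$-term and mere Fatou-type lower semicontinuity for the critical term, giving $J_{\lambda,k,r}\le I_\lambda(u)\le\limsup_n I_\lambda(u_n)=J_{\lambda,k,r}$. In other words, atoms never need to be excluded: even if concentration occurred, the weak limit would still be a minimizer. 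Your concentration--compactness route would also close the argument in principle --- comparing atom mass against the level bound $\frac{1}{2N}S^N<\frac{1}{N}S^N$ is exactly the right mechanism --- but it buys strong $L^{1^*}$ convergence that is not needed, at the price of the two hardest ingredients you yourself flag and leave unproved: Lions' second lemma in the non-reflexive $BV$ setting, and the local testing estimate at atoms. On the latter, the exact identity $\mu_j=\nu_j$ you posit is more than the nonsmooth Szulkin inequality yields; cut-off testing gives only $\mu_j\le\nu_j$, which combined with $S\,\nu_j^{1/1^*}\le\mu_j$ already forces $\nu_j\ge S^N$ and suffices. Finally, your statement that $u_n$ maximizes $t\mapsto I_\lambda(t u_n)$ holds only approximately, up to the $o_n(1)\|u_n\|_r$ error in (\ref{MP3}); boundedness (Lemma \ref{boundedness}) renders this harmless, but it should be said explicitly.
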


\begin{proof} The case $k=\infty$ is immediate because of the compact embedding (\ref{EMB1*}), then we will only show the case $1\leq k<+\infty$. Recalling that $J_{\lambda,k,r}$ is the mountain pass level of $I_\lambda$ on the space $BV_{G_k}(\Omega_r)$, we know that there is a $(PS)$ sequence $(u_n)$ (see (\ref{MP01})), such that,
\begin{equation*}
	I_\lambda(u_n) \to  J_{\lambda,k,r}
\end{equation*}
and
\begin{equation*}
	\|v\|_r-\|u_n\|_r \geq \int_{\Omega_r}(\lambda |u_n|^{q-2}u_n+|u_n|^{1^*-2}u)(v-u_n)\,dx-\tau_n \|v-u_n\|_r, \quad \forall v \in  BV_{G_k}(\Omega_r),
\end{equation*}
where $\tau_n \to 0$. Moreover, we also have the equality below
\begin{equation} \label{MP03}
	\|u_n\|_r=\int_{\Omega_r}(\lambda |u_n|^{q}+|u_n|^{1^*})\,dx+o_n(1)\|u_n\|_r.
\end{equation}	
Since that $(u_n)$ is bounded in $BV_{G_k}(\Omega_r)$, for some subsequence, there is $u \in BV_{G_k}(\Omega_r)$ such that
$$
\|u\|_r \leq \liminf_{n \to +\infty}\|u_n\|_r
$$
and
$$
u_n \to u \quad \mbox{in} \quad L^{t}(\Omega_r), \quad \forall t \in[1,1^*).
$$
We claim that $u \not= 0$, otherwise if we argue as in the proof of Lemma \ref{eta}, we would find a constant $C>0$ such that
\begin{equation} \label{EQ07}
	1 \leq C(\lambda\|{u}_n\|_r^{q-1}+\|{u}_n\|_r^{1^*-1}), \quad \forall n \in \mathbb{N}.
\end{equation}
On the other hand, by (\ref{MP03}),
\begin{equation*}
	\int_{\Omega_r}|D {u}_n|+ \int_{\Omega_r}|u_n|\,dx+\int_{\partial\Omega_r}|u_n| \,d\mathcal{H}^{N-1}= \int_{\Omega_r}|{u}_n|^{1^*}\,dx+o_n(1),
\end{equation*}
then for some subsequence,
$$
\lim_{n \to +\infty}\left(\int_{\Omega_r}|D {u}_n|+ \int_{\partial\Omega_r}|u_n| \,d\mathcal{H}^{N-1}\right)=\lim_{n \to +\infty}\int_{\Omega_r}|{u}_n|^{1^*}\,dx=L \geq 0.
$$
We claim that $L>0$, because otherwise we would have
$$
\lim_{n \to +\infty}\int_{\Omega_r}|{u}_n|^{1^*}\,dx=0,
$$
and so,
$$
\lim_{n \to +\infty}\|{u}_n\|_r=0,
$$
which contradicts (\ref{EQ07}). Since $L>0$, we can assume that $u_n \not=0$ for all $n \in\mathbb{N}$. Therefore, by definition of $S$, see (\ref{S}),
$$
S \leq \frac{\int_{\mathbb{R}^N}|D \tilde{u}_n|}{|\tilde{u}_n|_{{L^{1^*}(\mathbb{R}^N)}}}= \frac{\int_{\Omega_r}|D {u}_n|+ \int_{\partial\Omega_r}|u_n| \,d\mathcal{H}^{N-1}}{|{u}_n|_{{L^{1^*}(\mathbb{R}^N)}}}, \quad \forall n \in \mathbb{N},
$$
where $\tilde{u}_n$ is defined as in (\ref{tilde}). 
Letting $n \to +\infty$, we get
$$
S \leq \frac{L}{L^{\frac{1}{1^*}}}=L^{\frac{1}{N}},
$$
that is,
$$
L \geq S^{N}.
$$
Now, using the fact that
$$
J_{\lambda,k,r}+o_n(1)= I_{\lambda}({u}_n)=\frac{1}{N}\int_{\mathbb{R}^N}|\tilde{u}_n|^{1^*}\,dx+o_n(1)=\frac{1}{N}L\geq \frac{1}{N}S^N,
$$
which contradicts Lemma \ref{Estimativa1}. Now, arguing as in \cite{DegiovanniMagrone}, we also derive that
$$
\int_{\Omega_r} |D u|\,dx+\int_{\Omega_r}|u|\,dx+\int_{\partial\Omega_r}|u| \,d\mathcal{H}^{N-1}=\int_{\Omega_r}|u|^q\,dx+\int_{\Omega_r}|u|^{1^*}\,dx,
$$
from where it follows that $u \in \mathcal{N}_{k,r}$, and so,
$$
\begin{array}{l}
J_{\lambda,k,r} \leq I_\lambda(u)=\displaystyle \frac{\lambda(q-1)}{q}\displaystyle \int_{\Omega_r}|u|^q\,dx+\frac{(1^*-1)}{1^*}\int_{\Omega_r}|u|^{1^*}\,dx \\
\mbox{} \\
\hspace{0.8 cm} \leq \displaystyle \frac{\lambda(q-1)}{q}\displaystyle \lim_{n \to +\infty}\int_{\Omega_r}|u_n|^q\,dx+\liminf_{n \to +\infty}\frac{(1^*-1)}{1^*}\int_{\Omega_r}|u_n|^{1^*}\,dx\\
\mbox{} \\
\hspace{0.8 cm} \leq \displaystyle \frac{\lambda(q-1)}{q}\displaystyle \lim_{n \to +\infty}\int_{\Omega_r}|u_n|^q\,dx+\limsup_{n \to +\infty}\frac{(1^*-1)}{1^*}\int_{\Omega_r}|u_n|^{1^*}\,dx\\
\mbox{} \\
\hspace{0.8 cm} \leq \displaystyle \limsup_{n \to +\infty}(I(u_n)+o_n(1))=\displaystyle \limsup_{n \to +\infty}I(u_n)=J_{\lambda,k,r},
\end{array}
$$
from where it follows that $u \in \mathcal{N}_{k,r}$ and $I_\lambda(u) = J_{\lambda,k,r}$, showing the lemma.
\end{proof}

\begin{lemma} \label{Estimativa2}
	There exists $r_0=r_0(\lambda)>0$ such that
$$
J_{\lambda,\infty,r} \geq \frac{1}{2N}S^N,~\mbox{for}~~r> r_0.
$$
\end{lemma}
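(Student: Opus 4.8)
The plan is to argue by contradiction and to transport the whole problem to $\mathbb{R}^N$, where the compact embedding (\ref{EMB2}) of radial $BV$ functions is available; this is exactly where the hypothesis $\xi=1$ becomes indispensable. So I would suppose the conclusion fails and produce sequences $r_n\to+\infty$ and $u_n\in\mathcal{N}_{\infty,r_n}$ with $I_\lambda(u_n)<\frac{1}{2N}S^N$. Since $u_n$ lies on the Nehari set, the relation $\|u_n\|_{r_n}=\lambda\int_{\Omega_{r_n}}|u_n|^q\,dx+\int_{\Omega_{r_n}}|u_n|^{1^*}\,dx$ combined with the definition of $I_\lambda$ yields, exactly as in the proof of Lemma \ref{eta},
\[
I_\lambda(u_n)=\frac{\lambda(q-1)}{q}\int_{\Omega_{r_n}}|u_n|^q\,dx+\frac{1}{N}\int_{\Omega_{r_n}}|u_n|^{1^*}\,dx .
\]
Writing $B_n:=\int_{\Omega_{r_n}}|u_n|^{1^*}\,dx$, this shows both $\int|u_n|^q$ and $B_n<\frac{1}{2}S^N$ are bounded, and through the Nehari identity so is $(\|u_n\|_{r_n})$.

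Next I would pass to the whole space via the zero extension $\tilde u_n$ of (\ref{tilde}). The identities recorded in the proof of Lemma \ref{eta} give $\tilde u_n\in BV_{G_\infty}(\mathbb{R}^N)$ together with $\|\tilde u_n\|=\|u_n\|_{r_n}$; here it is precisely the zeroth-order term $\int_{\Omega_{r_n}}|u_n|\,dx$, present only because $\xi=1$, that upgrades the bound on $\|u_n\|_{r_n}$ into a bound on the full norm of $BV(\mathbb{R}^N)$ (with $\xi=0$ only the total variation would be controlled and (\ref{EMB2}) could not be invoked). Since $q\in(1,1^*)$, the compactness in (\ref{EMB2}) produces, along a subsequence, $\tilde u_n\to\tilde u$ in $L^q(\mathbb{R}^N)$; because the supports $\{r_n<|x|<r_n+1\}$ escape to infinity, every fixed ball is eventually free of them, forcing $\tilde u=0$, so that
\[
\int_{\Omega_{r_n}}|u_n|^q\,dx=\int_{\mathbb{R}^N}|\tilde u_n|^q\,dx\longrightarrow 0 .
\]

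With the subcritical mass eliminated, I would close the argument with two opposite estimates on $B_n$. On one side, the Nehari relation and the uniform lower bound $\|u_n\|_{r_n}\geq\eta_1>0$ obtained inside the proof of Lemma \ref{eta} give $B_n=\|u_n\|_{r_n}-o_n(1)\geq\eta_1-o_n(1)$, so $\liminf_n B_n>0$. On the other side, applying the definition of $S$ in (\ref{S}) to $\tilde u_n$, together with $\int_{\mathbb{R}^N}|D\tilde u_n|\leq\|u_n\|_{r_n}=B_n+o_n(1)$, yields
\[
S\,B_n^{\frac{N-1}{N}}=S\,|\tilde u_n|_{L^{1^*}(\mathbb{R}^N)}\leq\int_{\mathbb{R}^N}|D\tilde u_n|\leq B_n+o_n(1).
\]
Dividing by $B_n^{(N-1)/N}$ (legitimate since $B_n$ stays bounded away from $0$) gives $S\leq B_n^{1/N}+o_n(1)$, i.e.\ $\liminf_n B_n\geq S^N$, which contradicts $B_n<\frac{1}{2}S^N$. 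This establishes $J_{\lambda,\infty,r}\geq\frac{1}{2N}S^N$ for all sufficiently large $r$.

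The step I expect to be the genuine obstacle is the vanishing of the subcritical mass: one must verify that the zero extensions truly lie in $BV_{G_\infty}(\mathbb{R}^N)$ with norm controlled by $\|u_n\|_{r_n}$ — this is where the boundary integral $\int_{\partial\Omega_{r_n}}|u_n|\,d\mathcal{H}^{N-1}$ is absorbed into $\int_{\mathbb{R}^N}|D\tilde u_n|$ — and that (\ref{EMB2}) can be applied to a sequence whose supports run off to infinity, so that the strong $L^q$ limit is forced to be $0$. Once this compactness mechanism is secured, the rest is routine bookkeeping with the Nehari identity and the Sobolev constant $S$.
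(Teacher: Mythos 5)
Your proof is correct and follows essentially the same route as the paper's: contradiction with $r_n\to\infty$, zero extension into $BV_{G_\infty}(\mathbb{R}^N)$ with the boundary integral absorbed into the total variation (and the $\xi=1$ term supplying the full $BV(\mathbb{R}^N)$ norm bound), the compact embedding \eqref{EMB2} forcing the subcritical mass to vanish, and the Sobolev constant $S$ from \eqref{S} forcing $\liminf_n\int|u_n|^{1^*}dx\geq S^N$, contradicting the level bound $\frac{1}{2N}S^N$. The only deviations are cosmetic simplifications: the paper routes through attained minimizers (Lemma \ref{Estimativa0}) and auxiliary scaling parameters $t_n\in(0,1]$ to restore a Nehari-type equality in $\mathbb{R}^N$, both of which your direct bookkeeping with $B_n$ and the lower bound $\eta_1$ from Lemma \ref{eta} makes unnecessary.
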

\begin{proof}
Let us assume the opposite, i.e., that there exists a sequence $r_n\to\infty$ such that
$$
J_{\lambda,\infty,r_n}<\frac{1}{2N}S^N,~\mbox{for}~~n\in \mathbb{N}. 
$$
By Lemma \ref{Estimativa0}, $J_{\infty,r_n}$ is attained for all $n\in \mathbb{N}$, and so, there is $(u_n) \in BV_{G_\infty}(\Omega_{r_n})\setminus\{0\}$ such that
$$
E_\lambda(u_n)=0 \quad \mbox{and} \quad I_\lambda(u_n)=J_{\lambda,\infty,r_n},
$$
where $E_\lambda$ is given in (\ref{Elambda}). 

The assumption  $J_{\lambda,\infty,r_n} < \frac{1}{2N}S^N$ combined with the first equality above ensures that there is $M>0$ such that
\begin{equation} \label{Boud1}
\int_{\Omega_{r_n}} |D u_n|\,dx+\int_{\Omega_{r_n}}|u_n|\,dx+\int_{\partial\Omega_{r_n}}|u_n| \,d\mathcal{H}^{N-1} \leq M, \quad \forall n \in \mathbb{N}.
\end{equation}
Setting
$$
\tilde{u}_n(x)=
\left\{
\begin{array}{l}
	u_n(x), \quad x \in \Omega_{r_n}, \\
	\mbox{}\\
	0, \quad x \in \Omega_{r_n}^c,
\end{array}
\right.
$$
we have that $\tilde{u}_n \in BV_{G_\infty}(\mathbb{R}^N)$,
$$
\int_{\Omega_{r_n}}|u_n|^q\,dx=\int_{\mathbb{R}^N}|\tilde{u}_n|^q\,dx,  \quad \int_{\Omega_{r_n}}|{u_n}|^{1^*}\,dx=\int_{\mathbb{R}^N}|\tilde{u}_n|^{1^*}\,dx
$$
and
$$
\|\tilde{u}_n\| = \int_{\Omega_{r_n}} |D u_n|\,dx+\int_{\Omega_{r_n}}|u_n|\,dx+\int_{\partial\Omega_{r_n}}|u_n| \,d\mathcal{H}^{N-1}.
$$
The definition of $\tilde{u}_n$ combined with the fact that $E_\lambda(u_n)=0$ gives
\begin{equation} \label{EQ5}
\|\tilde{u}_n\| \leq \lambda \int_{\mathbb{R}^N}|\tilde{u}_n|^q\,dx+ \int_{\mathbb{R}^N}|\tilde{u}_n|^{1^*}\,dx.
\end{equation}
From the definition of $\tilde{u}_n$, it follows that $\tilde{u}_n\to 0$ a.e in $\mathbb{R}^N$, and so, 
	\begin{equation}\label{sv}
		\tilde{u_n}\to 0~~ in ~~L^t(\mathbb{R}^N)~~ for ~~t\in(1,1^*).
	\end{equation}
Moreover, from (\ref{EQ5}), there is $t_n \in (0,1]$ such that 	
\begin{equation} \label{EQ6}
\int_{\mathbb{R}^N}|D\tilde{u}_n|+ \int_{\mathbb{R}^N}|\tilde{u}_n|\,dx=\lambda t_n^{q-1} \int_{\mathbb{R}^N}|\tilde{u}_n|^q\,dx+ t_n^{1^*-1}\int_{\mathbb{R}^N}|\tilde{u}_n|^{1^*}\,dx.
\end{equation}
Arguing as in the proof of Lemma \ref{eta}, there is $C>0$ such that
\begin{equation} \label{EQ7}
1 \leq C(\lambda\|t_n\tilde{u}_n\|^{q-1}+\|t_n\tilde{u}_n\|^{1^*-1}), \quad \forall n \in \mathbb{N}.
\end{equation}
On the other hand, by (\ref{sv}) and(\ref{EQ6}),
\begin{equation*}
\int_{\mathbb{R}^N}|t_n D\tilde{u}_n|+ \int_{\mathbb{R}^N}|t_n\tilde{u}_n|\,dx= \int_{\mathbb{R}^N}|t_n\tilde{u}_n|^{1^*}\,dx+o_n(1),
\end{equation*}
then for some subsequence,
$$
\lim_{n \to +\infty}\|t_n \tilde{u}_n\|=\lim_{n \to +\infty}\int_{\mathbb{R}^N}|t_n\tilde{u}_n|^{1^*}\,dx=L \geq 0.
$$
We claim that $L>0$, since otherwise we would have
$$
\lim_{n \to +\infty}\int_{\mathbb{R}^N}|t_n\tilde{u}_n|^{1^*}\,dx=0,
$$
and so,
$$
\lim_{n \to +\infty}\|t_n \tilde{u}_n\|=0,
$$
which contradicts (\ref{EQ7}). Since $L>0$, we can assume that $u_n \not=0$ for all $n \in\mathbb{N}$. Therefore, by definition of $S$, see (\ref{S}),
$$
S \leq \frac{\int_{\mathbb{R}^N}|D \tilde{u}_n|}{|\tilde{u}_n|_{1^*}}\leq \frac{\|\tilde{u}_n\|}{|\tilde{u}_n|_{1^*}}, \quad \forall n \in \mathbb{N}.
$$
Letting $n \to +\infty$, we get
$$
S \leq \frac{L}{L^{\frac{1}{1^*}}}=L^{\frac{1}{N}},
$$
that is,
$$
L \geq S^{N}.
$$
Now, using the fact that
$$
\frac{1}{N}S^N \leq \frac{1}{N}L= I_{\lambda}(t_n\tilde{u}_n)(t_n \tilde{u}_n)=\frac{1}{N}\int_{\mathbb{R}^N}|t_n\tilde{u}_n|^{1^*}\,dx \leq \frac{1}{N}\int_{\mathbb{R}^N}|\tilde{u}_n|^{1^*}\,dx,
$$
that is, 
$$
\frac{1}{N}S^N \leq \frac{1}{N}\int_{\mathbb{R}^N}|\tilde{u}_n|^{1^*}\,dx=\frac{1}{N}\int_{\Omega_{r_n}}|\tilde{u}_n|^{1^*}\,dx=I_\lambda(u_n)=J_{\lambda,\infty,r_n}.
$$
Hence, 
$$
\frac{1}{N}S^N \leq \limsup_{n \to +\infty}J_{\lambda,\infty,r_n} \leq \frac{1}{2N}S^N,
$$
which is absurd. 
\end{proof}

\begin{lemma}
For each  $1\leq k<\infty$ and $2\leq m<\infty$, we have that $J_{\lambda,k,r}<J_{\lambda,km,r},$ for all $r \geq r_0$.
\end{lemma}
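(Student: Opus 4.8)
The inequality $J_{\lambda,k,r}\le J_{\lambda,km,r}$ is the soft part. Since a rotation by $2\pi l/k$ equals the rotation by $2\pi(lm)/(km)$, one has $O_k\subset O_{km}$, hence $G_k\subset G_{km}$ and $BV_{G_{km}}(\Omega_r)\subset BV_{G_k}(\Omega_r)$; consequently $\mathcal{N}_{km,r}\subset\mathcal{N}_{k,r}$, and taking the infimum of $I_\lambda$ over the smaller set gives the claim. The whole difficulty lies in the strict inequality, which I would obtain directly (without arguing by contradiction) by manufacturing, out of a $G_{km}$-minimizer, a $G_k$-competitor that carries only a definite fraction of its energy. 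By Lemma \ref{Estimativa0} (applicable since, by Lemma \ref{Estimativa1}, for $\lambda$ large both levels lie below the compactness threshold $\frac{1}{2N}S^N$) the level $J_{\lambda,km,r}$ is attained by some $G_{km}$-invariant $w\in\mathcal{N}_{km,r}$.

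The plan is to split $\Omega_r$ into the $km$ congruent angular sectors $T_0,\dots,T_{km-1}$ of opening $2\pi/(km)$ cyclically permuted by the rotations in $G_{km}$, on which $w$ consists of $km$ congruent copies of $w|_{T_0}$, and to define $v$ by keeping $w$ on the $k$ sectors $T_0,T_m,T_{2m},\dots,T_{(k-1)m}$ and setting $v=0$ on the remaining ones. Since $G_k$ maps $T_j$ to $T_{j+m}$, the function $v$ is $G_k$-invariant. Its lower-order contents scale exactly: $\int_{\Omega_r}|v|^s=\frac1m\int_{\Omega_r}|w|^s$ for $s\in\{1,q,1^*\}$ and $\int_{\partial\Omega_r}|v|=\frac1m\int_{\partial\Omega_r}|w|$, while $\int_{\Omega_r}|Dv|=\frac1m\int_{\Omega_r}|Dw|+R_r$, where $R_r\ge0$ collects the jump terms $\int|w^{\mathrm{tr}}|\,d\mathcal{H}^{N-1}$ created at the $2k$ radial interfaces where $v$ is truncated against $0$. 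Granting $R_r=o_r(1)$, one gets $E_\lambda(v)=\frac1m E_\lambda(w)+R_r=R_r\ge0$, so there is a Nehari factor $t_v=1+o_r(1)$ with $t_vv\in\mathcal{N}_{k,r}$, whence $I_\lambda(t_vv)=\frac1m J_{\lambda,km,r}+o_r(1)$ by the Nehari energy identity of Lemma \ref{Estimativa0}. Combining with the uniform lower bound $J_{\lambda,km,r}\ge\eta(\lambda)>0$ of Lemma \ref{eta}, this yields $J_{\lambda,k,r}\le I_\lambda(t_vv)\le\frac1m J_{\lambda,km,r}+o_r(1)<J_{\lambda,km,r}$ as soon as $o_r(1)<\left(1-\frac1m\right)\eta$, i.e. for $r\ge r_0$.

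The main obstacle is precisely the estimate $R_r=o_r(1)$: I must show that the boundary trace of the minimizer $w$ on the artificial radial interfaces $\{\theta=2\pi j/(km)\}$ becomes negligible as $r\to\infty$. Heuristically this holds because, for large $r$, each fundamental sector has arc-length of order $2\pi r/(km)\to\infty$ and the profile $w|_{T_0}$ concentrates in the interior of the sector, away from the cuts; rigorously I would bound the traces via the concentration analysis already underlying Lemma \ref{Estimativa2} together with the $BV$ trace inequality, which controls $R_r$ by the mass of $w$ in a thin angular neighborhood of the interfaces. A secondary point to verify is the uniformity of the threshold $r_0$ in $k$ and $m$; here the uniform constant $\eta(\lambda)$ from Lemma \ref{eta} is exactly what makes the gap $\left(1-\frac1m\right)J_{\lambda,km,r}$ beat the error term independently of $k$ and $m$. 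Alternatively, the same conclusion can be repackaged as an asymptotic expansion $J_{\lambda,k,r}=k\,e_\infty+o_r(1)$, with $e_\infty>0$ the limiting per-sector energy, from which $J_{\lambda,km,r}-J_{\lambda,k,r}=k(m-1)e_\infty+o_r(1)>0$ for $r\ge r_0$.
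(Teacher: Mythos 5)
Your soft inequality $J_{\lambda,k,r}\leq J_{\lambda,km,r}$ is correct ($G_k\subset G_{km}$, hence $BV_{G_{km}}(\Omega_r)\subset BV_{G_k}(\Omega_r)$ and $\mathcal{N}_{km,r}\subset\mathcal{N}_{k,r}$), and your sector-excision strategy for strictness is genuinely different from the paper's. But, as you yourself flag, everything hinges on $R_r=o_r(1)$, and this step is not proved; worse, the heuristic you offer for it is not available. Nothing in the paper says that the minimizer $w$ ``concentrates in the interior of the sector, away from the cuts'': $w$ is only known to be $G_{km}$-invariant, so it is congruent on every sector, and within the fundamental sector its mass may perfectly well sit exactly on the radial interfaces you chose; the ``concentration analysis underlying Lemma \ref{Estimativa2}'' is a contradiction argument about the levels $J_{\lambda,\infty,r_n}$ and yields no positional information about minimizers at finite levels $k,m$. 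The gap is repairable, but by a different mechanism than the one you invoke: use the freedom in the cut angle $\alpha$. In cylindrical coordinates one has $\int_{\Omega_r}\frac{|w|}{\rho}\,dx=\int_0^{2\pi}\bigl(\int_{S_\alpha}|w|\,d\mathcal{H}^{N-1}\bigr)\,d\alpha$, and since $\rho\geq r$ on $\Omega_r$ while $\|w\|_r\leq\frac{q}{q-1}\cdot\frac{1}{2N}S^N$ uniformly in $r$ (from $E_\lambda(w)=0$ and $I_\lambda(w)<\frac{1}{2N}S^N$, valid once $\lambda\geq\lambda_{km}^*$, which your argument, like the paper's, implicitly needs for attainment), an averaging argument produces cut angles for which the total trace over the $2k$ interfaces is $O(k/r)$. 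You would also have to verify two measure-theoretic points you do not address: that for a.e.\ $\alpha$ the one-sided $BV$ trace on the interface agrees $\mathcal{H}^{N-1}$-a.e.\ with the slice values, and that $|Dw|$ charges no interface (needed even for your exact splitting $\int_{T_j}|Dw|=\frac{1}{km}\int_{\Omega_r}|Dw|$). Finally, note that this route yields a threshold $r_0$ depending on $k,m$ through the rate of $R_r$, not the $k,m$-independent $r_0$ of Lemma \ref{Estimativa2} appearing in the statement; your appeal to the uniform $\eta(\lambda)$ controls the size of the gap but not the rate of the error (this weaker form would, however, still suffice for Theorem \ref{T1}).

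For contrast, the paper avoids cutting altogether: it approximates the minimizer $u\in\mathcal{N}_{km,r}$ by smooth functions $\varphi_n$ in the intermediate-convergence sense and applies the angular dilation $v_n(\theta,\rho,|y|)=\varphi_n(\theta/m,\rho,|y|)$, which is $G_k$-invariant, preserves all lower-order integrals and the boundary term exactly, creates no new interfaces, and reduces only the angular part of the gradient, since $|\nabla\omega_n|=\bigl(\frac{1}{m^2\rho^2}(\omega_n)_\theta^2+(\omega_n)_\rho^2+|\nabla_y\omega_n|^2\bigr)^{1/2}$. Strictness then comes not from a per-sector energy count but from nonradiality: if $\liminf_n\int\frac{1}{\rho^2}(\omega_n)_\theta^2\,\rho$ vanished, $u$ would be radial, i.e.\ $u\in BV_{G_\infty}(\Omega_r)$, contradicting $J_{\lambda,km,r}<\frac{1}{2N}S^N\leq J_{\lambda,\infty,r}$ for $r\geq r_0$ (Lemmas \ref{Estimativa1} and \ref{Estimativa2}). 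Your approach, if completed along the averaging lines above, would buy a stronger quantitative gap, $J_{\lambda,k,r}\leq\frac{1}{m}J_{\lambda,km,r}+o_r(1)$; as it stands, however, it is incomplete at its central step.
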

\begin{proof}
	
Let $u \in \mathcal{N}_{km,r}$ be  such that $I_\lambda(u)=J_{\lambda,km,r}$ and fix $\varphi_n \subset C^{\infty}(\Omega) \cap  BV_{G_{km,r}}(\Omega_{r})$ such that
$$
\varphi_n \to u \quad \mbox{in} \quad L^{1}(\Omega_r) \quad \mbox{as} \quad n \to +\infty
$$
and
$$
\int_{\Omega_r}|\nabla \varphi_n|\,dx \to \int_{\Omega_r}|D u| \quad \mbox{as} \quad n \to +\infty.
$$
Associated with $\varphi_n$, we set $v_n(\theta,\rho,|y|)=\varphi_n(\theta/m,\rho,|y|)$ where $(\theta,\rho)$ is the polar coordinates of $x\in \mathbb{R}^2$  and $y\in \mathbb{R}^{N-2}.$ Hence, $v_n \in BV_{G_{k,r}}(\Omega_{r})$ and there is $t_n>0$ such that $\omega_n=t_nv_n\in \mathcal{N}_{k,r}.$ A simple argument proves that $(t_n)$ is bounded, then up to a  subsequence, $t_n \to t_0$ as $n \to +\infty$. Then
$$
\omega_n \to t_0u \quad \mbox{in} \quad L^{1}(\Omega_r)\quad \mbox{as} \quad n \to +\infty
$$
and
$$
\int_{\Omega_r}|\nabla \omega_n|\,dx \to t_0\int_{\Omega_r}|D u| \quad \mbox{as} \quad n \to +\infty.
$$
Thus,
$$
\begin{array}{l}
J_{\lambda,k,r}\leq I_{\lambda}(\omega_n)=\displaystyle \int_{\Omega_r}|\nabla \omega_n|\,dx\,dy+\int_{\Omega_r}|\omega_n|dxdy+\int_{\partial\Omega_r}|\omega_n|d \mathcal{H}^{N-1}\\
\mbox{}\\ 
\hspace{2.5 cm} -\displaystyle\int_{\Omega_r}\left(\frac{\lambda}{q}|\omega_n|^q+\frac{1}{1^*}|\omega_n|^{1^*}\right)\,dx\,dy.
\end{array}
$$	
Thereby,
$$
\begin{array}{l}
J_{\lambda,k,r}\leq \displaystyle \int_{0}^{\pi}\int_{r}^{r+1}\int_0^{2\pi}|\nabla \omega_n|\rho \,d\theta \,d\rho\,dy++\int_{\Omega_r}|\omega_n|dxdy+\int_{\partial\Omega_r}|\omega_n|\,d\mathcal{H}^{N-1}\\
\mbox{}\\
\hspace{1 cm} -\displaystyle  \int_{\Omega_r}\left(\frac{\lambda}{q}|\omega_n|^q+\frac{1}{1^*}|\omega_n|^{1^*}\right)\,dx\,dy,
\end{array}
$$
where 	$|\nabla \omega_n|=(\frac{1}{\rho^2m^2}(\omega_n)_\theta^2+(\omega_n)_\rho^2+|\nabla_y \omega_n|^2)^{\frac{1}{2}}.$ Using the fact that $m>1$ we get
$$
\begin{array}{l}
\displaystyle \int_{0}^{\pi}\int_{r}^{r+1}\int_0^{2\pi}\frac{1}{m^2\rho^2}(\omega_n)_\theta^2\rho\,d\theta\,d\rho\,dy<\int_{0}^{\pi}\int_{r}^{r+1}
\int_0^{2\pi}\frac{1}{\rho^2}(\omega_n)_\theta^2\rho\,d\theta\,d\rho\,dy+ \\
\mbox{}\\
\hspace{6.5 cm} \displaystyle +\left(\frac{1}{m^2}-1\right)\int_{0}^{\pi}\int_{r}^{r+1}\int_0^{2\pi}\frac{1}{\rho^2}(\omega_n)_\theta^2\rho\,d\theta\,d\rho\,dy.
\end{array}
$$
We claim $\displaystyle \liminf_{n \to +\infty}\int_{0}^{\pi}\int_{r}^{r+1}\int_0^{2\pi}\frac{1}{\rho^2}(\omega_n)_\theta^2\rho\,d\theta\,d\rho\,dy=\sigma	>0$, otherwise we have that for some subsequence
$$
\lim_{n \to +\infty}\int_{0}^{\pi}\int_{r}^{r+1}\int_0^{2\pi}(\omega_n)_\theta^2\,d\theta\,d\rho\,dy=0.
$$
Since $\omega_n \in W^{1,1}((0,\pi)\times (r,r+1)\times(0,2\pi))$ and $w_n \to t_0u$ in $L^{1}(\Omega)$, the last limit implies that $u(x)=u(|x|)$, that is $u \in BV_{G_{\infty,r}}(\Omega_r)$, which is absurd, since $J_{\lambda,km,r}<J_{\lambda,\infty,r}$,\ (see Lemmas \ref{Estimativa1} and \ref{Estimativa2}). The previous analysis ensures that
$$
J_{\lambda,k,r}\leq I_{\lambda}(t_0u)-\sigma < I_{\lambda}(t_0u) \leq I_{\lambda}(u)=J_{\lambda,km,r}.
$$
\end{proof}

\subsection{Proof of Theorem \ref{T1}}

Given $n \in \mathbb{N}$, by Lemmas \ref{Estimativa0} and \ref{Estimativa2} we know that $J_{\lambda,2^n,r}$ are critical levels of $I_\lambda$ with
$$
0<J_{\lambda,2,r}<J_{\lambda,2^2,r}<...<J_{\lambda,2^n,r}<J_{\lambda,\infty,r}.
$$
Applying the Principle of Symmetric Criticality (see \cite{Squassina}), it follows that they are critical points of $I_\lambda$ in $BV(\Omega_r)$. This way, all minimizers of $J_{\lambda,2^n,r}$ for $m = 1, . . . , n$ are nonradial, rotationally
non-equivalent and non-negative solutions of (\ref{P1}).

\section{Existence of multiple solutions via genus}

In this section we will prove Theorem \ref{T2}, which implies in the existence of multiple solutions for the problem
\begin{equation} \label{P2}
	\left\{	\begin{array}{l}
		-	\Delta_1 u =\lambda |u|^{q-2}u+|u|^{1^*-2}u, \quad\text{in }\Omega,\\
		u=0, \quad\text{on } \partial\Omega,
	\end{array}
	\right.
\end{equation}
where $\Omega \subset \mathbb{R}^N$ is a smooth bounded domain in $\mathbb{R}^N$ with $N\geq 2$, $\lambda>0$ and $q \in (1,1^*)$. Let us recall that $u \in BV(\Omega)$ is a solution of (\ref{P2}) if there is $z \in L^\infty(\Omega,\mathbb{R}^N)$ such that
\begin{equation}
	\left\{
	\begin{array}{l}
		|z|_\infty \leq 1, \quad \mbox{div} \, z \in L^{N}(\Omega), \quad -\displaystyle \int_{\Omega}u\mbox{div}z dx = \int_{\Omega}|Du|+\int_{\partial\Omega}|u|\,d \mathcal{H}^{N-1},\\
		\mbox{} \\
		-\mbox{div} \, z = \lambda |u|^{q-2}u+|u|^{1^*-2}u, \quad \mbox{a.e. in} \quad \Omega.
	\end{array}
	\right.
	\label{eulerlagrangeequationP2}
\end{equation}

In this section, we will consider the energy functional $I_\lambda : L^{1^*}(\Omega) \to (-\infty,+\infty]$,  given by
\begin{equation} \label{Ilambda2}
I_\lambda(u)=\int_{\Omega} |D u|\,dx+\int_{\partial\Omega}|u| \,d\mathcal{H}^{N-1}-\frac{\lambda}{q}\int_{\Omega}|u|^q\,dx-\frac{1}{1^*}\int_{\Omega}|u|^{1^*}\,dx.
\end{equation}

Hereafter, let us consider the functional $f_0:L^{1^*}(\Omega) \to [0,+\infty]$ given by
$$
f_0(u)=
\left\{
\begin{array}{l}
	\int_{\Omega}|D u|\,dx + \int_{\partial\Omega}|u| \,d\mathcal{H}^{N-1}, \quad \mbox{if} \quad u \in BV(\Omega) \\
	\mbox{} \\
	+\infty, \quad \mbox{if} \quad u \in L^{1^*}(\Omega) \setminus BV(\Omega),
\end{array}
\right.
$$
which is convex and lower semicontinuous in $L^{1^*}(\Omega)$. Moreover, let us define \linebreak $f_1: L^{1^*}(\Omega) \to [0,+\infty]$ by
$$
f_1(u) = \frac{\lambda}{q}\int_{\Omega}|u|^q\,dx + \frac{1}{1^*}\int_{\Omega}|u|^{1^*}\,dx,
$$
which is a $C^1$ functional. 

Then, the functional $I_\lambda$ is written as the difference between a convex, proper and lower semicontinuous functional and a $C^1$ one. Hence, in the light of \cite{Szulkin}, we denote by $\partial I_\lambda(u)$, the subgradient of $I_\lambda$ at $u \in L^{1^*}(\Omega)$, which is well defined as a subset of $L^N(\Omega)$.

By \cite[Proposition 4.23]{Kawohl}, we have the following result.

\begin{proposition}
\label{C1}
Assume that $u \in BV(\Omega)$ is a critical point of $I_\lambda$, i.e., $0 \in \partial I_\lambda(u)$. Then $u \in L^\infty(\Omega)$ and $u$ is a solution of \eqref{P2}, in the sense of \eqref{eulerlagrangeequationP2}.
\end{proposition}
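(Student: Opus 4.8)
The plan is to translate the abstract criticality condition furnished by Szulkin's calculus \cite{Szulkin} into the concrete Euler--Lagrange system \eqref{eulerlagrangeequationP2}, the vector field $z$ being produced through the Anzellotti pairing theory \cite{AnzellottiAMPA1983, AndreuCasellesMazonMonograph}. First I would unwind the meaning of $0 \in \partial I_\lambda(u)$. Writing $I_\lambda = f_0 - f_1$ with $f_0$ convex, proper and lower semicontinuous and $f_1 \in C^1$, this relation is equivalent to $f_1'(u) \in \partial f_0(u)$ in the sense of convex analysis. Setting $h := \lambda|u|^{q-2}u + |u|^{1^*-2}u$, one checks from $u \in L^{1^*}(\Omega)$, $q < 1^*$ and $\Omega$ bounded that $h = f_1'(u) \in L^{N}(\Omega) = \big(L^{1^*}(\Omega)\big)'$; hence the criticality condition becomes the variational inequality
\begin{equation*}
\int_\Omega |Dv| + \int_{\partial\Omega}|v|\,d\mathcal{H}^{N-1} - \int_\Omega |Du| - \int_{\partial\Omega}|u|\,d\mathcal{H}^{N-1} \ge \int_\Omega h\,(v-u)\,dx \quad \forall\, v \in BV(\Omega),
\end{equation*}
the case $v \in L^{1^*}(\Omega)\setminus BV(\Omega)$ being vacuous since there $f_0(v)=+\infty$.

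The heart of the argument is the construction of $z$. I would exploit that $f_0$ is the total variation functional with the homogeneous Dirichlet boundary term, whose convex conjugate is the indicator of the set $\{-\mbox{div}\,z : z \in L^\infty(\Omega,\mathbb{R}^N),\ |z|_\infty \le 1\}$. Thus $h \in \partial f_0(u)$ yields a field $z \in L^\infty(\Omega,\mathbb{R}^N)$ with $|z|_\infty \le 1$ and $-\mbox{div}\,z = h$ in $\mathcal{D}'(\Omega)$; since $h \in L^N(\Omega)$ we get $\mbox{div}\,z \in L^N(\Omega)$, the integrability required in \eqref{eulerlagrangeequationP2}. The Fenchel--Young equality then reads $f_0(u) = \int_\Omega h\,u\,dx = -\int_\Omega u\,\mbox{div}\,z\,dx$. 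Expanding the right-hand side by Anzellotti's Gauss--Green formula for the pairing $(z,Du)$ and the normal trace $[z,\nu]$, and comparing with $f_0(u) = \int_\Omega |Du| + \int_{\partial\Omega}|u|\,d\mathcal{H}^{N-1}$, the pointwise bounds $(z,Du) \le |Du|$ as measures and $|[z,\nu]| \le 1$ force the two equalities $(z,Du) = |Du|$ and $[z,\nu]\,u = -|u|$ $\mathcal{H}^{N-1}$-a.e. on $\partial\Omega$. These are precisely the pairing identity $-\int_\Omega u\,\mbox{div}\,z\,dx = \int_\Omega |Du| + \int_{\partial\Omega}|u|\,d\mathcal{H}^{N-1}$ appearing in the first line of \eqref{eulerlagrangeequationP2}.

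With $z$ in hand, the relation $-\mbox{div}\,z = h = \lambda|u|^{q-2}u + |u|^{1^*-2}u$ a.e.\ is exactly the last line of \eqref{eulerlagrangeequationP2}, so $u$ solves \eqref{P2}. It remains to prove $u \in L^\infty(\Omega)$, and here I would appeal to \cite[Proposition 4.23]{Kawohl}, whose hypotheses are met because the right-hand side $h$ has critical growth and lies in $L^N(\Omega)$. If instead one argued directly, the natural route is to test the variational inequality with truncations $v = u - G_t(u)$, where $G_t(s) = (|s|-t)^+\,\mbox{sign}(s)$, and run a Stampacchia/De Giorgi iteration on the super-level sets $\{|u|>t\}$, the critical exponent $1^*$ together with the $L^N$-integrability of $h$ providing the decay of $|\{|u|>t\}|$ needed to bound $\|u\|_\infty$.

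The step I expect to be the main obstacle is the construction of $z$ and the verification of the two pairing identities: this is where the non-smoothness of $I_\lambda$ and the non-reflexivity of $BV(\Omega)$ are felt most acutely, and where Anzellotti's theory (and the compatibility of the boundary trace with the Dirichlet condition) is indispensable. The $L^\infty$ bound at the critical exponent is a secondary but genuine difficulty, which is precisely why leaning on \cite[Proposition 4.23]{Kawohl} is the most economical way to close the proof.
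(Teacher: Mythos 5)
Your argument is correct and follows essentially the same route as the paper: write $I_\lambda=f_0-f_1$, translate $0\in\partial I_\lambda(u)$ into $f_1'(u)\in\partial f_0(u)$ with $f_1'(u)\in L^N(\Omega)$, and extract the vector field $z$ from the characterization of $\partial f_0(u)$. Where the paper simply cites \cite[Proposition 4.23]{Kawohl} for that characterization, you re-derive it via the Fenchel--Young equality and the Anzellotti pairing; since your derivation is precisely the content of the cited proposition (the conjugate of $f_0$ being the indicator of $\{-\mbox{div}\,z:\ |z|_\infty\le 1\}$, with the equality case forcing $(z,Du)=|Du|$ and $[z,\nu]u=-|u|$ on $\partial\Omega$), this is an expansion of the paper's citation rather than a divergence.

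One citation is misplaced: you attach \cite[Proposition 4.23]{Kawohl} to the $L^\infty$ bound, but that proposition is the subdifferential characterization you have already consumed, and it says nothing about boundedness; the paper obtains $u\in L^\infty(\Omega)$ from \cite[Proposition 3.3]{DegiovanniMagrone}. Your fallback truncation argument is nevertheless the right direct substitute, though the mechanism is not measure decay: testing the variational inequality with $v=T_t(u)=u-R_t(u)$ and using the splitting $f_0(u)=f_0(T_t(u))+f_0(R_t(u))$ (which the paper itself uses in the proof of Lemma \ref{Pscondition}) yields $f_0(R_t(u))\le\int_\Omega h\,R_t(u)\,dx\le |h|_{L^N(\{|u|>t\})}\,S^{-1}\,f_0(R_t(u))$, where $S$ is applied to the zero extension of $R_t(u)$; since $|h|_{L^N(\{|u|>t\})}\to 0$ as $t\to+\infty$ by absolute continuity of the integral, $R_t(u)=0$ for $t$ large, i.e. $u\in L^\infty(\Omega)$. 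With only $h\in L^N(\Omega)$ and no such smallness-on-superlevel-sets argument, a bare Stampacchia/De Giorgi iteration sits exactly at the borderline exponent and does not close, so the phrase in your sketch about the decay of $|\{|u|>t\}|$ should be replaced by this absolute-continuity argument, which is essentially what \cite[Proposition 3.3]{DegiovanniMagrone} does.
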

\begin{proof}
Note that
$$
0 \in \partial I_\lambda(u)
$$
if and only if
$$
f_1'(u) \in \partial f_0(u).
$$
On the other hand, the last inclusion implies that there exists $w\in \partial f_{0}(u) \subset L^N(\Omega)$ such that
\begin{equation}
\label{eqsolution1}
f_1'(u) = w \quad \mbox{in $L^N(\Omega)$.}
\end{equation}
Taking into account the characterization of $\partial f_{0}(u)$  given in \cite[Proposition 4.23]{Kawohl}, there exists $z \in L^\infty(\Omega,\mathbb{R}^N)$, such that $|z|_\infty \leq 1$ and
\begin{equation}
	\left\{
	\begin{array}{l}
		-\mbox{div} \, z = w, \quad \mbox{a.e. in} \quad \Omega, \\
		\mbox{} \\
		\displaystyle \int_{\Omega}w u dx = \int_{\Omega}|Du|+\int_{\partial\Omega}|u|\,d \mathcal{H}^{N-1}. \\
		\end{array}
	\right.
	\label{eqsolution2}
\end{equation}
By \eqref{eqsolution1} and \eqref{eqsolution2}, we also have that
$$
-\mbox{div} \, z = \lambda |u|^{p-2}u + |u|^{1^*-2}u, \quad \mbox{a.e. in} \quad \Omega.
$$
Hence, $u$ satisfies \eqref{eulerlagrangeequationP2}. The fact that $u \in L^\infty(\Omega)$ is a regularity result which follows as in \cite[Proposition3.3]{DegiovanniMagrone}.

\end{proof}

Now let us define what we mean by a $(PS)$ sequence for $I_\lambda$. We say that $(u_k) \subset L^{1^*}(\Omega)$ is a $(PS)$ sequence for $I_\lambda$ if there exist $d \in \mathbb{R}$ and $(z_k) \subset L^N(\Omega)$ such that $|z_k|_N \to 0$ as $k \to +\infty$, 
\begin{equation}
\lambda|u_k|^{q-2}u_k + |u_k|^{1^*-2}u_k + z_k \in \partial f_0(u_k)
\label{PScondition}
\end{equation} 
and
$$
I_\lambda(u_k) \to d, \quad \mbox{as $k \to +\infty$}.
$$

Next, we state an abstract result, whose proof follows as in Szulkin  \cite[Theorem 4.4]{Szulkin}. Hereafter $X$ denotes a Banach space. We say that a functional $I : X \to (-\infty,+\infty]$ satisfies the condition $(H)$ if:

\begin{itemize}
\item [$(H)$] $I = \Phi+\psi$, where $\Phi \in C^{1}(X,\mathbb{R})$ and $\psi:X \to (-\infty,+\infty]$ is convex, proper (i.e. $\psi \not\equiv +\infty$) and lower semicontinuous.
\end{itemize}

Moreover, for each $c \in \mathbb{R}$ we denote
$$
I_{c}=\{u \in X\,:\,I(u) \leq c\}
$$
and by $\Sigma$ the collection of all symmetric subsets of $X\backslash\{0\}$ which are closed in $X$.

\begin{theorem}\label{AmRbthm}
	Assume that $I:X \to (-\infty,+\infty]$ satisfies $(H)$, $I(0)=0$ and $\Phi,\psi$ are even and there is $d>0$ such that $I$ has no critical points in $I_{-d}$. Assume also that
	\begin{enumerate}
		\item[a.] there is $M>0$ such that $I$ satisfies $(PS)_c$ condition for $0<c<M.$
		\item[b.] there exist $\alpha,\rho>0$ such that
		$$
		I(u)\geq\alpha \quad \mbox{for} \quad \,\,||u||=\rho.
		$$
		\item[c.] given $n \in \mathbb{N}$, there is a finite dimensional subspace $X_n \subset X$ and $R_n >\rho$ such that
		$$
		I|_{\partial Q_n}\leq -d \quad \mbox{where} \quad Q_n=\overline{B}_{R_n} \cap X_n.
		$$
	\end{enumerate}
Denoting by $\mathcal{F}$ the set
$$
\mathcal{F}=\{f \in C(Q_n,X)\,:\,f \,\,\mbox{is odd and } f|_{\partial Q_n} \approx id_{\partial Q_n} \,\, \mbox{in} \,\, I_{-d} \,\,\mbox{by an odd homotopy} \},
$$
we consider for each $j \in \mathbb{N}$ the sets $\Lambda'_j$ and $\Lambda_j$ given by,
$$
\Lambda'_j=
\left\{
\begin{array}{l}
	f(Q_n-V):\, f \in \mathcal{F}, V \, \mbox{is open in} \ Q_n \,\, \mbox{and symmetric,} \,\, V \cap \partial Q_n = \emptyset ,  \\
	\mbox{and for each } \,\, Y \subset V \mbox{such that} \,\, Y \in \Sigma, \gamma(Y) \leq k-j
\end{array}
\right\}
$$
and
$$
\Lambda_j=
\left\{
\begin{array}{l}
	A \subset X\,:\, A \,\, \mbox{is compact, symmetric and for each open set} \,\, U \supset A,  \\
	\mbox{there is} \,\, A_0 \in \Lambda'_j \,\, \mbox{such that} \,\, A_0 \subset U
\end{array}
\right\}.
$$
Using the above notation, the numbers
$$
c_j=\inf_{A \in \Lambda_j}\sup_{u \in A}I(u)
$$
are well defined for all $j \in \mathbb{N}$ and $0<\alpha\leq c_1\leq c_2\leq ....\leq c_{j}\leq c_{j+1}\leq ...$ for all $j \in\mathbb{N}$. If $c_n<M,$ then $c_j$ are critical values of $I$ for $j \in \{1,2,...,n\}$. Moreover, if there are $j_0 \in \{1,2,...,n\}$ and $p \in \mathbb{N}$ such that $c_{j_0}=...=c_{j_0+p}=c<M,$ then $\gamma(K_c)\geq p+1.$

\end{theorem}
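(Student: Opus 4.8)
The plan is to run the classical symmetric minimax scheme of Ambrosetti--Rabinowitz, but inside Szulkin's nonsmooth calculus, where the one indispensable tool is an \emph{equivariant} quantitative deformation lemma. I would first record the even version of Szulkin's deformation result: if $I=\Phi+\psi$ satisfies $(H)$ and $(PS)_c$ for some $c\in(0,M)$ and $c$ is not a critical value, then for each sufficiently small $\bar\epsilon>0$ there are $\epsilon\in(0,\bar\epsilon)$ and a continuous $\eta:[0,1]\times X\to X$ such that $\eta(0,\cdot)=\mathrm{id}$, $\eta(1,I_{c+\epsilon})\subset I_{c-\epsilon}$, $\eta(t,u)=u$ whenever $I(u)\notin[c-\bar\epsilon,c+\bar\epsilon]$, each $\eta(t,\cdot)$ is a homeomorphism, and, since $\Phi$ and $\psi$ are even, each $\eta(t,\cdot)$ is odd. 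The descent is Szulkin's; oddness comes from symmetrizing his $\epsilon$-approximate steepest-descent field, which is legitimate because both pieces of $I$ are even. Because every relevant level satisfies $c\ge\alpha>0>-d$, I may shrink $\bar\epsilon$ so that $[c-\bar\epsilon,c+\bar\epsilon]\cap(-\infty,-d]=\emptyset$; then $\eta(t,\cdot)$ is the identity on $I_{-d}$, which is precisely what keeps the boundary-homotopy constraint defining $\mathcal F$ untouched. The hypothesis that $I$ has no critical points in $I_{-d}$ guarantees this homotopy class is stable under the construction.

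Next I would verify the elementary structure of the classes. The family $\mathcal F$ is nonempty: the inclusion $\iota\colon Q_n\hookrightarrow X$ is odd, $\iota|_{\partial Q_n}=\mathrm{id}_{\partial Q_n}$ is trivially oddly homotopic to itself, and $\partial Q_n\subset I_{-d}$ by $(c)$, so $\iota\in\mathcal F$; hence each $\Lambda'_j$ and each $\Lambda_j$ is nonempty. Because the genus bound $\gamma(Y)\le k-j$ in $\Lambda'_j$ tightens as $j$ grows, one reads off $\Lambda_{j+1}\subset\Lambda_j$, and therefore $c_j\le c_{j+1}$. Finiteness of $c_j$ follows by choosing $X_n\subset BV(\Omega)$, where $I$ is finite and continuous, so that the sets built from $\iota$ have finite supremum. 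For the lower bound $c_1\ge\alpha$ I would invoke a standard genus/Borsuk--Ulam intersection lemma: for $f\in\mathcal F$ with $f|_{\partial Q_n}$ oddly homotopic to $\mathrm{id}$ through $I_{-d}$, the set $f(Q_n\setminus V)$ with $V$ of small genus cannot avoid the sphere $S_\rho=\{\|u\|=\rho\}$, and on $S_\rho$ one has $I\ge\alpha$ by $(b)$. Thus $\sup_A I\ge\alpha$ for all $A\in\Lambda_1$, giving $0<\alpha\le c_1\le c_2\le\cdots$.

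The criticality and multiplicity statements then follow by the deformation of the first paragraph. If some $c=c_j<M$ (possible for $j\le n$ since $c_j\le c_n<M$) were a regular value, take $A\in\Lambda_j$ with $\sup_A I<c+\epsilon$ and apply $\eta$. The composition $\eta(1,\cdot)\circ f$ lies again in $\mathcal F$: it is odd, it equals the identity on $I_{-d}$ so the boundary homotopy survives, and being an odd homeomorphism it does not raise the genus of the excluded sets, so the $\Lambda'_j$ constraint persists; since $\eta(1,\cdot)$ is a homeomorphism one checks $\eta(1,A)\in\Lambda_j$ through the limiting definition of $\Lambda_j$. This produces a competitor with $\sup I\le c-\epsilon<c_j$, a contradiction, so each $c_j$ is critical. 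For multiplicity, assume $c_{j_0}=\cdots=c_{j_0+p}=c<M$ and, for contradiction, $\gamma(K_c)\le p$; here $K_c$ is compact by $(PS)_c$ and symmetric with $0\notin K_c$ as $c>0$. By continuity of the genus choose a symmetric open $U\supset K_c$ with $\gamma(\overline U)\le p$, and use the deformation localized away from $K_c$ to send $I_{c+\epsilon}\setminus U$ into $I_{c-\epsilon}$ while fixing $I_{-d}$. For near-optimal $A=f(Q_n\setminus V)\in\Lambda_{j_0+p}$, subadditivity of the genus gives, for symmetric $Y\subset V\cup f^{-1}(U)$, the bound $\gamma(Y)\le (k-j_0-p)+\gamma(f^{-1}(U))\le(k-j_0-p)+p=k-j_0$, so after removing $U$ the set lands in $\Lambda_{j_0}$; deforming then yields $\sup I\le c-\epsilon$, contradicting $c_{j_0}=c$. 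Hence $\gamma(K_c)\ge p+1$.

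The step I expect to be the genuine obstacle is the equivariant deformation lemma of the first paragraph. Its difficulty is not the decrease of $I$, which is exactly Szulkin's, but the \emph{simultaneous} requirements that the flow be odd and be frozen on $I_{-d}$, so that post-composition maps $\mathcal F$ into itself and preserves both the odd boundary homotopy and the genus constraint of $\Lambda'_j$. Since $\psi$ is merely convex and lower semicontinuous there is no gradient flow available, so one must symmetrize Szulkin's approximate-subgradient construction and check that the cutoff keeping $\eta$ inert outside $[c-\bar\epsilon,c+\bar\epsilon]$ is compatible with oddness. Once this equivariant deformation is secured, the remaining genus bookkeeping is routine.
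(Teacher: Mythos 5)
Your proposal is correct and takes essentially the same route as the paper: the paper offers no independent argument, stating only that the proof ``follows as in Szulkin \cite[Theorem 4.4]{Szulkin}'', and Szulkin's proof of that theorem is precisely the scheme you reconstruct --- an odd deformation for even $\Phi,\psi$ that is frozen on $I_{-d}$, combined with the Ambrosetti--Rabinowitz classes $\Lambda'_j,\Lambda_j$, the intersection lemma giving $c_1\geq\alpha$, and the genus-subadditivity argument for $\gamma(K_c)\geq p+1$. The equivariant deformation you flag as the genuine obstacle is exactly the point settled in Szulkin's framework (his deformation need only be continuous and odd, not a homeomorphism, which the limiting definition of $\Lambda_j$ is designed to accommodate), so no new ingredient is required.
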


Now, following the approach explored in \cite{DegiovanniMagrone}, for each $h>0$ we consider the functions $T_h,R_h:\mathbb{R} \to \mathbb{R}$ given by
$$
T_h(s) = \min\{ \max\{s,-h\},h\} \quad \mbox{and} \quad R_h(s)=s-T_h(s).
$$
A simple computation shows that for each $u \in L^{1^*}(\Omega)$, 
\begin{equation} \label{TH1}
T_h(u) \to u \quad \mbox{in} \quad L^{1^*}(\Omega) \quad \mbox{as} \quad h \to +\infty,
\end{equation}
and so,
\begin{equation} \label{RH1}
R_h(u) \to 0 \quad \mbox{in} \quad L^{1^*}(\Omega) \quad \mbox{as} \quad h \to +\infty.
\end{equation}
Moreover, if $(u_k) \subset L^{1^*}(\Omega)$ is a sequence satisfying 
$$
u_k(x) \to u(x) \quad \mbox{a.e. in} \quad \Omega \quad \mbox{as} \quad k \to +\infty,
$$
then, for each $h$ fixed, the Lebesgue Dominated Convergence Theorem ensures that 
\begin{equation}
T_h(u_k) \to T_h(u) \quad \mbox{in} \quad L^{1^*}(\Omega) \quad \mbox{as} \quad k \to +\infty.
\label{TH2}
\end{equation}

\begin{proposition} \label{Prop1} Let $(u_k)$ be a sequence in $BV(\Omega)$ and $(w_k)$ be a sequence in $L^{N}(\Omega)$ such that, for $k \in \mathbb{N}$, $w_k \in \partial f_0(u_k)$ and, as $k \to +\infty$,
$$
u_k \rightharpoonup u \quad \mbox{in $L^{1^*}(\Omega)$},
$$
$$
w_k \rightharpoonup w \quad \mbox{in $L^N(\Omega)$.}
$$
Then $u \in BV(\Omega)$ and  $w \in \partial f_0(u)$.
\end{proposition}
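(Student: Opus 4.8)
The plan is to exploit the fact that $f_0$ is convex, proper, lower semicontinuous and positively $1$-homogeneous on $L^{1^*}(\Omega)$. By the Fenchel--Young relation together with the $1$-homogeneity, its conjugate is the indicator of the convex set
\[
K=\Big\{w\in L^N(\Omega)\,:\,\int_\Omega wv\,dx\leq f_0(v)\ \ \forall v\in L^{1^*}(\Omega)\Big\},
\]
so that $w_k\in\partial f_0(u_k)$ is equivalent to the two conditions $w_k\in K$ and the Euler identity $\int_\Omega w_ku_k\,dx=f_0(u_k)$. First I would check $u\in BV(\Omega)$. Using the Euler identity, H\"older's inequality (recall $(1^*)'=N$) and the boundedness of the weakly convergent sequences, $f_0(u_k)=\int_\Omega w_ku_k\,dx\leq |w_k|_N|u_k|_{1^*}\leq C$; since a convex, strongly lower semicontinuous functional is weakly sequentially lower semicontinuous, $f_0(u)\leq\liminf_k f_0(u_k)\leq C<+\infty$, whence $u\in BV(\Omega)$. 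Moreover $(u_k)$ is then bounded in $BV(\Omega)$, so along a subsequence $u_k\to u$ in $L^t(\Omega)$ and a.e. for every $t\in[1,1^*)$, by the compact embeddings.

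Next I would pass to the limit in the two conditions. From $w_k\in K$ we have $\int_\Omega w_kv\,dx\leq f_0(v)$ for every $v$; letting $k\to+\infty$, with $w_k\rightharpoonup w$ in $L^N(\Omega)$ and $v\in L^{1^*}(\Omega)=L^{N'}(\Omega)$ fixed, gives $\int_\Omega wv\,dx\leq f_0(v)$ for all $v$, i.e. $w\in K$; in particular $\int_\Omega wu\,dx\leq f_0(u)$. Thus $w\in\partial f_0(u)$ will follow once the reverse Euler inequality $\int_\Omega wu\,dx\geq f_0(u)$ is established; this is the heart of the matter.

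To attack the Euler equality I would bring in the vector field. By the characterization of $\partial f_0$ in \cite[Proposition 4.23]{Kawohl}, each $w_k$ admits $z_k\in L^\infty(\Omega,\mathbb{R}^N)$ with $|z_k|_\infty\leq 1$, $-\mbox{div}\,z_k=w_k$, and $\int_\Omega w_ku_k\,dx=\int_\Omega|Du_k|+\int_{\partial\Omega}|u_k|\,d\mathcal{H}^{N-1}$; in terms of Anzellotti's pairing this says $(z_k,Du_k)=|Du_k|$ as measures, together with the corresponding boundary identity $[z_k\cdot\nu]u_k=-|u_k|$. Since $|z_k|_\infty\leq 1$, up to a subsequence $z_k\to z$ weakly-$*$ in $L^\infty(\Omega,\mathbb{R}^N)$ with $|z|_\infty\leq 1$, and passing to the distributional limit in $-\mbox{div}\,z_k=w_k$ identifies $-\mbox{div}\,z=w$. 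Anzellotti's Green formula then yields $\int_\Omega wu\,dx=\int_\Omega(z,Du)-\int_{\partial\Omega}[z\cdot\nu]u\,d\mathcal{H}^{N-1}\leq f_0(u)$, recovering the easy inequality; equality is exactly the statement that the limit field $z$ remains calibrated for $u$, i.e. $(z,Du)=|Du|$ and $[z\cdot\nu]u=-|u|$ on $\partial\Omega$.

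The main obstacle is precisely this last passage to the limit, for which it suffices to show that $\int_\Omega w_k(u_k-u)\,dx\to 0$. This cannot follow from the weak-weak convergences alone: it is a compensated-compactness phenomenon, since the product of two merely weakly convergent sequences need not converge to the product of the limits, and the structure $w_k=-\mbox{div}\,z_k$ with the calibration identities must be used. My plan is to split $u_k-u$ by means of the truncations $T_h$ and $R_h$: on the truncated part $T_h(u_k)\to T_h(u)$ in $L^{1^*}(\Omega)$ by \eqref{TH2} (using the a.e. convergence obtained above), so $\int_\Omega w_k\big(T_h(u_k)-T_h(u)\big)\,dx\to 0$ for each fixed $h$ because $(w_k)$ is bounded in $L^N(\Omega)$; the tail is then controlled by the lower semicontinuity of the Anzellotti pairing together with $|R_h(u)|_{1^*}\to 0$ from \eqref{RH1}. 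The genuinely delicate point, where the critical exponent enters, is the uniform control of the tail $R_h(u_k)$ as $h\to+\infty$, since $|u_k|^{1^*}$ need not be uniformly integrable (possible concentration); handling this, together with the lower semicontinuity of the pairing measures under the weak-$*$ convergence of $z_k$, is where the real work lies.
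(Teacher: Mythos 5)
Your reduction is sound as far as it goes: the description of $\partial f_0$ via $K=\partial f_0(0)$ together with the Euler identity $\int_\Omega w_ku_k\,dx=f_0(u_k)$ is correct for the $1$-homogeneous, convex, l.s.c.\ functional $f_0$; the bound $f_0(u_k)\leq C$ plus weak sequential lower semicontinuity correctly yields $u\in BV(\Omega)$; and the easy half $w\in K$, hence $\int_\Omega wu\,dx\leq f_0(u)$, passes to the limit as you say. But the proof stops exactly at the decisive step, and the target you set there is the wrong one. Proving $\int_\Omega w_k(u_k-u)\,dx\to0$ amounts (since $\int_\Omega w_ku\,dx\to\int_\Omega wu\,dx$ by weak convergence against the fixed $u\in L^{1^*}(\Omega)$) to $f_0(u_k)\to f_0(u)$, i.e.\ to excluding concentration of total variation --- precisely the phenomenon you concede you cannot rule out; lower semicontinuity of the Anzellotti pairing and \eqref{RH1} control the tail $R_h(u)$ of the limit, not $R_h(u_k)$ uniformly in $k$, so the sketched tail estimate has no source. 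Note moreover that only the one-sided statement $\liminf_k\int_\Omega w_k(u_k-u)\,dx\leq0$ is needed, since then $f_0(u)\leq\liminf_k f_0(u_k)=\liminf_k\int_\Omega w_ku_k\,dx\leq\int_\Omega wu\,dx$.

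The missing idea, which closes the argument with no uniform tail control at all, is that the Euler identity splits exactly under truncation. Since $|v|=|T_h(v)|+|R_h(v)|$ pointwise and, by the coarea formula, $f_0(v)=f_0(T_h(v))+f_0(R_h(v))$ (an identity the paper itself invokes in the proof of Lemma \ref{Pscondition}), the two inequalities $\int_\Omega w_kT_h(u_k)\,dx\leq f_0(T_h(u_k))$ and $\int_\Omega w_kR_h(u_k)\,dx\leq f_0(R_h(u_k))$, valid because $w_k\in K$, sum to the equality $\int_\Omega w_ku_k\,dx=f_0(u_k)$; hence \emph{each is an equality}. Now fix $h$. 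By your first step $(u_k)$ is bounded in $BV(\Omega)$, so up to a subsequence (harmless, as the conclusion below is subsequence-independent) $u_k\to u$ a.e., and \eqref{TH2} gives $T_h(u_k)\to T_h(u)$ strongly in $L^{1^*}(\Omega)$. The weak--strong pairing and lower semicontinuity then yield
\begin{equation*}
\int_\Omega w\,T_h(u)\,dx=\lim_{k\to+\infty}\int_\Omega w_k\,T_h(u_k)\,dx=\lim_{k\to+\infty}f_0(T_h(u_k))\geq f_0(T_h(u)).
\end{equation*}
Finally let $h\to+\infty$: the left-hand side tends to $\int_\Omega wu\,dx$ by \eqref{TH1}, while $f_0(T_h(u))\to f_0(u)$ because $f_0(T_h(u))\leq f_0(u)$ and $f_0$ is l.s.c.\ along $T_h(u)\to u$ in $L^{1^*}(\Omega)$. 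Hence $\int_\Omega wu\,dx\geq f_0(u)$, which together with $w\in K$ gives $w\in\partial f_0(u)$; the truncation is applied only to the fixed limit $u$, so the concentration difficulty never arises. For comparison: the paper offers no argument here, citing \cite[Proposition 3.2]{DegiovanniMagrone} instead, and the proof just described is essentially the one behind that citation --- so your route (truncations plus the subdifferential structure) was the right one, with only this one-sided bookkeeping missing; the detour through the vector fields $z_k$ and Anzellotti's theory, while correct, is not needed for this proposition.
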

\begin{proof} See \cite[Proposition 3.2]{DegiovanniMagrone}.
	
\end{proof}

\begin{lemma} \label{ThRh} Let $(u_k)$ be a $(PS)$ sequence for $I_\lambda$. Assume that $(u_k)$ is bounded in $BV(\Omega)$ and 
$$
u_k \rightharpoonup u \quad \mbox{in $L^{1^*}(\Omega)$}.
$$
Then,
$$
\lim_{k \to +\infty}(f_0(u_k)-|u_k|_{1^*}^{1^*})=f_0(u)-|u|_{1^*}^{1^*} \leqno{(i)}
$$
and, for $h > 0$ fixed,
$$
\lim_{k \to +\infty}(f_0(R_h(u_k))-|R_h(u_k)|_{1^*}^{1^*}) \leq f_0(R_h(u))-|R_h(u)|_{1^*}^{1^*}. \leqno{(ii)}
$$
\end{lemma}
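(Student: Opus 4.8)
The plan is to prove $(i)$ by exploiting the positive $1$-homogeneity of $f_0$, which upgrades the subgradient inequality into an exact Euler identity. Since $(u_k)$ is a $(PS)$ sequence, by \eqref{PScondition} the element $w_k:=\lambda|u_k|^{q-2}u_k+|u_k|^{1^*-2}u_k+z_k$ belongs to $\partial f_0(u_k)$, so that $f_0(v)\ge f_0(u_k)+\int_\Omega w_k(v-u_k)\,dx$ for every $v\in L^{1^*}(\Omega)$. Testing this inequality with $v=0$ and with $v=2u_k$, and using $f_0(0)=0$ together with $f_0(2u_k)=2f_0(u_k)$, I obtain the two opposite inequalities $\int_\Omega w_ku_k\,dx\ge f_0(u_k)$ and $f_0(u_k)\ge\int_\Omega w_ku_k\,dx$, hence the identity $f_0(u_k)=\int_\Omega w_ku_k\,dx=\lambda|u_k|_q^q+|u_k|_{1^*}^{1^*}+\int_\Omega z_ku_k\,dx$. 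Consequently $f_0(u_k)-|u_k|_{1^*}^{1^*}=\lambda|u_k|_q^q+\int_\Omega z_ku_k\,dx$.

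Passing to the limit is then routine: the compact embedding $BV(\Omega)\hookrightarrow L^q(\Omega)$ (valid since $q\in(1,1^*)$) gives $|u_k|_q^q\to|u|_q^q$, while $|z_k|_N\to0$ together with the boundedness of $(u_k)$ in $L^{1^*}(\Omega)$ forces $\int_\Omega z_ku_k\,dx\to0$; thus $f_0(u_k)-|u_k|_{1^*}^{1^*}\to\lambda|u|_q^q$ along the full sequence. It remains to identify this limit with $f_0(u)-|u|_{1^*}^{1^*}$, and here I would invoke Proposition \ref{Prop1}. Indeed $w_k\rightharpoonup w:=\lambda|u|^{q-2}u+|u|^{1^*-2}u$ in $L^N(\Omega)$: the subcritical term converges strongly in $L^{q'}\hookrightarrow L^N$ (with $q'=q/(q-1)>N$), the critical term $|u_k|^{1^*-2}u_k$ is bounded in $L^N(\Omega)$ and converges a.e. (hence weakly), and $z_k\to0$. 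Proposition \ref{Prop1} then yields $w\in\partial f_0(u)$, and applying the same Euler identity at $u$ gives $f_0(u)=\int_\Omega wu\,dx=\lambda|u|_q^q+|u|_{1^*}^{1^*}$, that is, $\lambda|u|_q^q=f_0(u)-|u|_{1^*}^{1^*}$, proving $(i)$.

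For $(ii)$ the plan is to reduce everything to $(i)$ by means of the decomposition of the total variation under truncation, $f_0(v)=f_0(T_h(v))+f_0(R_h(v))$ for $v\in BV(\Omega)$, which follows from the coarea formula together with the additivity $|v|=|T_h(v)|+|R_h(v)|$ of the same-signed boundary trace term. Writing $\psi_h(s)=|s|^{1^*}-|R_h(s)|^{1^*}\ge0$, so that $|R_h(u_k)|_{1^*}^{1^*}=|u_k|_{1^*}^{1^*}-\int_\Omega\psi_h(u_k)\,dx$, this decomposition gives
$$
f_0(R_h(u_k))-|R_h(u_k)|_{1^*}^{1^*}=\Big(f_0(u_k)-|u_k|_{1^*}^{1^*}\Big)-f_0(T_h(u_k))+\int_\Omega\psi_h(u_k)\,dx.
$$
By $(i)$ the first bracket converges to $f_0(u)-|u|_{1^*}^{1^*}$; by \eqref{TH2} one has $T_h(u_k)\to T_h(u)$ in $L^{1^*}(\Omega)$, so the weak lower semicontinuity of $f_0$ yields $\limsup_k(-f_0(T_h(u_k)))\le-f_0(T_h(u))$. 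Reversing the same algebra for $u$ shows that the right-hand side of the claimed inequality is exactly $(f_0(u)-|u|_{1^*}^{1^*})-f_0(T_h(u))+\int_\Omega\psi_h(u)\,dx$, so that everything reduces to proving $\int_\Omega\psi_h(u_k)\,dx\to\int_\Omega\psi_h(u)\,dx$.

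I expect this last convergence to be the main obstacle, since Fatou's lemma only gives the inequality in the wrong direction. The remedy is uniform integrability: from $|R_h(s)|\le|s|$ and the mean value theorem one gets the subcritical bound $0\le\psi_h(s)\le 1^*h|s|^{1^*-1}$, and since $|u_k|^{1^*-1}$ is bounded in $L^N(\Omega)$ (because $(1^*-1)N=1^*$), Hölder's inequality gives $\int_{\{|u_k|>M\}}\psi_h(u_k)\,dx\le C\,h\,M^{-1}$ uniformly in $k$. Hence $(\psi_h(u_k))$ is uniformly integrable; combined with the a.e. convergence $\psi_h(u_k)\to\psi_h(u)$ (along a subsequence, which suffices for a $\limsup$ estimate), Vitali's convergence theorem delivers $\int_\Omega\psi_h(u_k)\,dx\to\int_\Omega\psi_h(u)\,dx$. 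Assembling the three limits proves $(ii)$.
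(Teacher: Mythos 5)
Your proof is correct. For part $(i)$ it follows essentially the paper's route --- extract $w_k=\lambda|u_k|^{q-2}u_k+|u_k|^{1^*-2}u_k+z_k\in\partial f_0(u_k)$, establish the Euler identity $f_0(u_k)=\int_\Omega w_ku_k\,dx$, pass to the weak limit of $(w_k)$ in $L^N(\Omega)$, apply Proposition \ref{Prop1}, and conclude from the Euler identity at $u$ together with $|u_k|_q^q\to|u|_q^q$ and $\int_\Omega z_ku_k\,dx\to 0$ --- with one genuine difference: where the paper obtains both Euler identities from the characterization of $\partial f_0$ in \cite[Proposition 4.23]{Kawohl}, you derive them from the positive $1$-homogeneity of $f_0$, testing the subgradient inequality at $v=0$ and $v=2u_k$. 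Your device is more elementary and self-contained (it uses only convexity, $f_0(0)=0$ and $f_0(2u_k)=2f_0(u_k)$), whereas the paper's citation additionally produces the vector field $z$ and its pairing property, which the paper needs elsewhere (Proposition \ref{C1}) but not in this lemma. For part $(ii)$ the contrast is sharper: the paper gives no argument at all, stating only that it ``follows as in \cite[Lemma 5.1]{DegiovanniMagrone}'', while you supply a complete proof via the truncation splitting $f_0(v)=f_0(T_h(v))+f_0(R_h(v))$ (the same identity the paper invokes later, in the proof of Lemma \ref{Pscondition}), lower semicontinuity of $f_0$ along $T_h(u_k)\to T_h(u)$, and Vitali's theorem for $\int_\Omega\psi_h(u_k)\,dx$, justified by the subcritical bound $0\le\psi_h(s)\le 1^*h|s|^{1^*-1}$ and the exponent identity $(1^*-1)N=1^*$; this is sound and close in spirit to the cited source, so what your version buys is self-containedness. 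Two points you use tacitly deserve a word in a write-up: the a.e. convergence $u_k\to u$ (needed both for \eqref{TH2} and for Vitali) is not among the hypotheses, but follows up to a subsequence from the boundedness of $(u_k)$ in $BV(\Omega)$ and the compact embedding into $L^1(\Omega)$; and since the conclusion of $(ii)$ is really a $\limsup$ estimate, your passage first to a $\limsup$-realizing subsequence and then to a further a.e.-convergent one is exactly the right way to close the argument.
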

\begin{proof} Since $(u_k)$ is a $(PS)$ sequence, there is $(z_k) \subset L^{N}(\Omega)$, where $z_k = o_k(1)$ in $L^{N}(\Omega)$ and
\begin{eqnarray*}
f_0(v)-f_0(u_k) & \geq & \lambda\int_{\Omega}|u_k|^{q-2}u_k(v-u_k)\,dx+\int_{\Omega}|u_k|^{1^*-2}u_k(v-u_k)\,dx\\
& & +\int_{\Omega}z_k(v-u_k)\,dx,
\end{eqnarray*}
for all $v \in L^{1^*}(\Omega)$. From this, 
$$
\lambda |u_k|^{q-2}u_k+|u_k|^{1^*-2}u_k+z_k \in \partial f_0(u_k), \quad \forall k \in\mathbb{N},
$$
and then there exists $w_k \in \partial f_0(u_k) $ such that
\begin{equation}
w_k=\lambda |u_k|^{q-2}u_k+|u_k|^{1^*-2}u_k+z_k, \quad \forall k \in \mathbb{N}.
\label{eqprop1}
\end{equation}
Moreover, by \cite[Proposition 4.23]{Kawohl},
\begin{equation}
\int_{\Omega}w_ku_k\,dx=	\int_{\Omega}|D u_k|\,dx +\int_{\partial\Omega}|u_k| \,d\mathcal{H}^{N-1}, \quad \forall k \in \mathbb{N}.
\label{eqprop2}
\end{equation}
Thus, from \eqref{eqprop1} and \eqref{eqprop2}, for all $k \in \mathbb{N}$,
\begin{eqnarray}
\nonumber \int_{\Omega}|D u_k|\,dx +\int_{\partial\Omega}|u_k| \,d\mathcal{H}^{N-1} & = & \lambda \int_{\Omega}|u_k|^q\,dx+\int_{\Omega}|u_k|^{1^*}\,dx\\
\label{eqprop11}& & +\int_{\Omega}z_ku_k\,dx.
\end{eqnarray}

Since $q \in (1,1^*)$, H\"older's inequality implies that
$\left(|u_k|^{q-2}u_k\right)$ and $\left(|u_k|^{1^*-2}u_k\right)$ are bounded in $L^N(\Omega)$. Indeed, it is straightforward to see that 
$$
\left||u_k|^{1^*-2}u_k\right|_N = |u_k|_{1^*}^\frac{1}{N-1}
$$
and
\begin{eqnarray*}
\left||u_k|^{q-2}u_k\right|_N^N & = & \int_\Omega |u_k|^{(q-1)N} dx\\
&  \leq & \left( \int_\Omega |u_k|^{1^*} dx\right)^{(N-1)(q-1)} |\Omega|^{1-(N-1)(q-1)},
\end{eqnarray*}
from where it follows that both these sequences are bounded in $L^N(\Omega)$.
Then
\begin{equation}
\label{eqprop31}
|u_k|^{q-2}u_k \rightharpoonup |u|^{q-2}u
\end{equation}
and
\begin{equation}
\label{eqprop32}
|u_k|^{1^*-2}u_k \rightharpoonup |u|^{1^*-2}u
\end{equation}
in $L^N(\Omega)$.

Hence, from \eqref{eqprop31} and  \eqref{eqprop32}, 
\begin{equation}
\label{eqprop4}
w_k \rightharpoonup w \quad \mbox{in $L^N(\Omega)$,}
\end{equation}
with
$$
w = -\gamma + \lambda |u|^{q-2}u+|u|^{1^*-2}u.
$$

Taking into account the hypothesis and \eqref{eqprop4}, Proposition \ref{Prop1} yields that $u \in BV(\Omega)$, 
$$
\lambda |u|^{q-2}u+|u|^{1^*-2}u \in \partial f_0(u)
$$
and then, by (\ref{eqprop2}), 
\begin{equation}
\int_{\Omega}|D u|\,dx +\int_{\partial\Omega}|u| \,d\mathcal{H}^{N-1}= \lambda \int_{\Omega}|u|^q\,dx+\int_{\Omega}|u|^{1^*}\,dx.
\label{eqprop41}
\end{equation}
Hence, from \eqref{eqprop11},
$$
\begin{array}{l}
\displaystyle \lim_{k \to +\infty}(f_0(u_k)-|u_k|_{1^*}^{1^*})=\lim_{k \to +\infty}\left( \lambda \int_{\Omega}|u_k|^q\,dx+\int_{\Omega}z_ku_k\,dx\right) \\
\mbox{} \\
\hspace{4 cm} = \displaystyle \lambda \int_{\Omega}|u|^q\,dx.
\end{array}
$$
Then, from the last equality and \eqref{eqprop41}, it follows that
$$
\lim_{k \to +\infty}(f_0(u_k)-|u_k|_{1^*}^{1^*})=(f_0(u)-|u|_{1^*}^{1^*}),
$$
showing $(i)$. The item $(ii)$ follows as in \cite[Lemma 5.1]{DegiovanniMagrone}.

\end{proof}

\begin{lemma} Each $(PS)$ sequence for $I_\lambda$ is bounded in $BV(\Omega)$.
	
\end{lemma}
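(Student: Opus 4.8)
The plan is to mimic the coercivity argument of Lemma \ref{boundedness}, combining the energy bound $I_\lambda(u_k)\to d$ with the Nehari-type identity that the $(PS)$ condition produces. First I would fix a $(PS)$ sequence $(u_k)$, so that there is $(z_k)\subset L^N(\Omega)$ with $|z_k|_N\to 0$ and $w_k:=\lambda|u_k|^{q-2}u_k+|u_k|^{1^*-2}u_k+z_k\in\partial f_0(u_k)$. By the characterization of $\partial f_0$ in \cite[Proposition 4.23]{Kawohl} (the same identity \eqref{eqprop2} used in Lemma \ref{ThRh}), testing the subgradient with $u_k$ gives
$$
f_0(u_k)=\int_\Omega w_k u_k\,dx=\lambda\int_\Omega|u_k|^q\,dx+\int_\Omega|u_k|^{1^*}\,dx+\int_\Omega z_k u_k\,dx .
$$

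Next I would subtract this identity from the energy. Writing $I_\lambda(u_k)=f_0(u_k)-\frac{\lambda}{q}\int_\Omega|u_k|^q\,dx-\frac{1}{1^*}\int_\Omega|u_k|^{1^*}\,dx$ and inserting the relation above yields
$$
I_\lambda(u_k)=\frac{q-1}{q}\lambda\int_\Omega|u_k|^q\,dx+\frac{1^*-1}{1^*}\int_\Omega|u_k|^{1^*}\,dx+\int_\Omega z_k u_k\,dx ,
$$
where, since $1<q<1^*$, the coefficients in front of the two nonnegative integrals are strictly positive. As $I_\lambda(u_k)$ is bounded, the whole matter reduces to absorbing the last term.

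The main obstacle is to control $\int_\Omega z_k u_k\,dx$ \emph{before} knowing that $(u_k)$ is bounded. Here the exponents cooperate: $N$ and $1^*$ are conjugate, so H\"older gives $\bigl|\int_\Omega z_k u_k\,dx\bigr|\le |z_k|_N\,|u_k|_{1^*}=|z_k|_N\bigl(\int_\Omega|u_k|^{1^*}\,dx\bigr)^{1/1^*}$. Since $1/1^*<1$ and $|z_k|_N\to 0$, a scaled Young inequality with conjugate exponents $1^*$ and $N$ yields, for small $\epsilon>0$, the bound $|z_k|_N\,|u_k|_{1^*}\le \frac{\epsilon}{1^*}\int_\Omega|u_k|^{1^*}\,dx+C_\epsilon|z_k|_N^{N}$. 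Choosing $\epsilon<\frac{1^*-1}{2}$ so that $\frac{\epsilon}{1^*}<\frac{1^*-1}{2\cdot 1^*}$ absorbs half of the critical term into the left-hand side; because $|z_k|_N^N\to 0$, I conclude that both $\int_\Omega|u_k|^q\,dx$ and $\int_\Omega|u_k|^{1^*}\,dx$ remain bounded, so $(u_k)$ is bounded in $L^{1^*}(\Omega)$.

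Finally I would return to the tested identity to estimate $f_0(u_k)=\int_\Omega|Du_k|\,dx+\int_{\partial\Omega}|u_k|\,d\mathcal{H}^{N-1}$: its right-hand side is now a sum of bounded quantities (with $\int_\Omega z_k u_k\,dx\to 0$). As the boundary integral is nonnegative, this bounds $\int_\Omega|Du_k|\,dx$, and since $\Omega$ is bounded, $|u_k|_{L^1(\Omega)}\le|\Omega|^{1-1/1^*}|u_k|_{1^*}$ is bounded as well. Hence $\|u_k\|=\int_\Omega|Du_k|\,dx+|u_k|_{L^1(\Omega)}$ is bounded, that is, $(u_k)$ is bounded in $BV(\Omega)$.
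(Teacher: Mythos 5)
Your proof is correct and follows essentially the same route as the paper: both test the subgradient identity of \cite[Proposition 4.23]{Kawohl} against $u_k$ to get $f_0(u_k)=\lambda|u_k|_q^q+|u_k|_{1^*}^{1^*}+\int_\Omega z_k u_k\,dx$, combine it with the energy bound, and control the error term by H\"older with the conjugate exponents $N$ and $1^*$. The only differences are cosmetic bookkeeping: the paper subtracts $\frac{1}{q}$ times the identity, keeping a positive multiple of $f_0(u_k)$ and absorbing the error via the elementary lower bound for $t\mapsto t^{1^*}-t$, so it bounds $f_0(u_k)$ in one step, whereas you subtract the full identity, first obtain the $L^{1^*}$ bound by a scaled Young inequality, and then recover the bound on $f_0(u_k)$ in a second pass --- both are valid.
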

\begin{proof} Let $(u_k)$ be a $(PS)_d$ sequence for $I_\lambda$, that is,
$$
I_\lambda(u_k) \to d \quad \mbox{as} \quad k \to +\infty
$$	
and
\begin{eqnarray*}
f_0(v)-f_0(u_k) & \geq & \lambda\int_{\Omega}|u_k|^{q-2}u_k(v-u_k)\,dx\\
& & +\int_{\Omega}|u_k|^{1^*-2}u_k(v-u_k)\,dx + \int_{\Omega}z_k(v-u_k)\,dx,
\end{eqnarray*}
where $(z_k) \subset L^{N}(\Omega)$, with $z_k = o_k(1)$ in $L^{N}(\Omega)$, as $k \to +\infty$.

By Proposition \cite[Proposition 4.23]{Kawohl}, for all $k \in \mathbb{N}$,
\begin{eqnarray*}
\int_{\Omega}|D u_k|\,dx +\int_{\partial\Omega}|u_k| \,d\mathcal{H}^{N-1} & = & \lambda \int_{\Omega}|u_k|^q\,dx + \int_{\Omega}|u_k|^{1^*}\,dx \\
& & +\int_{\Omega}z_k u_k\,dx.
\end{eqnarray*}
Now, let us denote
\begin{eqnarray*}
\mathcal{Q}(u_k) & = & \int_{\Omega}|D u_k|\,dx +\int_{\Omega}|u_k|\,dx+\int_{\partial\Omega}|u_k| \,d\mathcal{H}^{N-1} \\
& & -\lambda \int_{\Omega}|u_k|^q\,dx - \int_{\Omega}|u_k|^{1^*}\,dx-\int_{\Omega}z_ku_k\,dx
\end{eqnarray*}
and note that
\begin{equation}
\mathcal{Q}(u_k)=0, \quad \forall k \in \mathbb{N}.
\label{eqPSbounded2}
\end{equation}
Thus, from \eqref{eqPSbounded2}, 
\begin{eqnarray*}
d+o_k(1) & = & I_\lambda(u_k)\\
& = & I_\lambda(u_k)-\frac{1}{q}\mathcal{Q}(u_k)\\
&  \geq & \left(1-\frac{1}{q}\right)f_0(u_k)+\left(1-\frac{1}{q}\right)|u_k|_1+\left(\frac{1}{q}-\frac{1}{1^*}\right)|u_k|_{1^*}^{1^*} + \frac{1}{q}\displaystyle \int_{\Omega}z_ku_k\,dx \\
& \geq & \left(1-\frac{1}{q}\right)f_0(u_k) + \left(\frac{1}{q}-\frac{1}{1^*}\right)|u_k|_{1^*}^{1^*}-\frac{1}{q}|z_k|_N|u_k|_{1^*} \\
& \geq  & \left(1-\frac{1}{q}\right)f_0(u_k) + \left(\frac{1}{q}-\frac{1}{1^*}\right)(|u_k|_{1^*}^{1^*}-|u_k|_{1^*}),
\end{eqnarray*}
for $k$ large enough. Since $g:[0,+\infty) \to \mathbb{R}$, given by
$$
g(t)=t^{1^*}-t
$$
is bounded from below, there exists $K > 0$ such that
$$
g(t) \geq -K, \quad \forall t \in [0,+\infty).
$$
Then
$$
d+o_k(1) \geq \left(1-\frac{1}{q}\right)f_0(u_k) - \left(\frac{1}{q}-\frac{1}{1^*}\right)K,
$$
from where it follows that $(u_k)$ is bounded in $BV(\Omega)$.
\end{proof}

\begin{lemma} 
\label{Pscondition}
For each $\lambda > 0$, the functional $I_\lambda$ satisfies the $(PS)_c$ condition, for $c<\frac{1}{N}S^{N}$.	
\end{lemma}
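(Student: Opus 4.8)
The plan is to follow the Br\'ezis--Nirenberg scheme adapted to the $BV$ setting as in Degiovanni and Magrone \cite{DegiovanniMagrone}, using Lemma \ref{ThRh} as the substitute for the Brezis--Lieb splitting. Let $(u_k)$ be a $(PS)_c$ sequence with $c<\frac{1}{N}S^N$. By the preceding lemma it is bounded in $BV(\Omega)$, so along a subsequence $u_k\rightharpoonup u$ in $L^{1^*}(\Omega)$, $u_k\to u$ in $L^t(\Omega)$ for every $t\in[1,1^*)$, and $u_k\to u$ a.e. in $\Omega$. First I would record that Lemma \ref{ThRh} (via Proposition \ref{Prop1}) already identifies the weak limit as a critical point: $\lambda|u|^{q-2}u+|u|^{1^*-2}u\in\partial f_0(u)$ and, by \eqref{eqprop41}, $f_0(u)=\lambda|u|_q^q+|u|_{1^*}^{1^*}$. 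In particular $I_\lambda(u)=\lambda(1-\tfrac1q)|u|_q^q+\tfrac1N|u|_{1^*}^{1^*}\geq0$, a fact I will use to close the estimate.

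Next I would extract the energy budget. Using identity \eqref{eqprop11} together with $|\int_\Omega z_ku_k\,dx|\leq|z_k|_N|u_k|_{1^*}\to0$, one gets $f_0(u_k)=\lambda|u_k|_q^q+|u_k|_{1^*}^{1^*}+o_k(1)$, whence $I_\lambda(u_k)=\lambda(1-\tfrac1q)|u_k|_q^q+\tfrac1N|u_k|_{1^*}^{1^*}+o_k(1)$. Since $q<1^*$ gives $|u_k|_q^q\to|u|_q^q$ by compactness, and writing $A:=\lim_k|u_k|_{1^*}^{1^*}$ along a further subsequence, the Brezis--Lieb lemma yields $A=|u|_{1^*}^{1^*}+B$ with $B:=\lim_k|u_k-u|_{1^*}^{1^*}\geq0$ the lost mass. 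Lemma \ref{ThRh}$(i)$ then gives $\lim_kf_0(u_k)=f_0(u)+B$, so the excess total variation equals the excess critical mass. These identities combine to $c=I_\lambda(u)+\tfrac1N B\geq\tfrac1N B$, and it therefore suffices to prove that $B=0$.

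The crux is a concentration estimate ruling out $B>0$. Here I would exploit the truncations $R_h$ together with the Sobolev inequality, which in the present notation reads $f_0(w)=\int_{\mathbb R^N}|D\widetilde w|\geq S\,|w|_{1^*}$ for every $w\in BV(\Omega)$, by \eqref{S}. Applying this to $R_h(u_k)$ and combining with Lemma \ref{ThRh}$(ii)$, namely $\limsup_k\bigl(f_0(R_h(u_k))-|R_h(u_k)|_{1^*}^{1^*}\bigr)\leq f_0(R_h(u))-|R_h(u)|_{1^*}^{1^*}$, I would pass to the limit first in $k$ and then in $h$. As $h\to\infty$, \eqref{RH1} gives $|R_h(u)|_{1^*}\to0$ and the total variation of $R_h(u)$, supported on $\{|u|>h\}$, vanishes, so that $f_0(R_h(u))-|R_h(u)|_{1^*}^{1^*}\to0$; meanwhile, a Brezis--Lieb computation for the tails shows that $\lim_k|R_h(u_k)|_{1^*}^{1^*}$ captures asymptotically precisely the concentrated mass $B$ as $h\to\infty$. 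Together with the Sobolev bound $f_0(R_h(u_k))\geq S\,|R_h(u_k)|_{1^*}$ this yields $S\,B^{1/1^*}\leq B$, so that either $B=0$ or $B\geq S^N$. The latter would force $c\geq\tfrac1N S^N$, contradicting $c<\tfrac1N S^N$; hence $B=0$.

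Finally, $B=0$ means $|u_k|_{1^*}\to|u|_{1^*}$, and since $u_k\rightharpoonup u$ in the uniformly convex space $L^{1^*}(\Omega)$, convergence of the norms upgrades weak to strong convergence, giving $u_k\to u$ in $L^{1^*}(\Omega)$; combined with the first step this establishes $(PS)_c$. I expect the main obstacle to be the concentration step of the third paragraph: unlike the Hilbert-space case, $f_0$ mixes the total variation with the boundary trace and is only lower semicontinuous, so the Brezis--Lieb-type splitting must be routed entirely through Lemma \ref{ThRh} and the truncations, with careful attention to the order of the limits in $k$ and $h$ and to justifying that the truncated tails absorb exactly the concentrated mass $B$.
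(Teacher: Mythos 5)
Your proof is correct, and while it shares the paper's toolkit (the truncations $T_h, R_h$, Lemma \ref{ThRh}, and the extension-by-zero Sobolev inequality $f_0(w)\geq S|w|_{1^*}$ coming from \eqref{S}), at the decisive step it runs a genuinely different argument. The paper never introduces the lost mass $B$: from \eqref{eqPS2} and the nonnegativity of the $q$-term it extracts the uniform bound $\limsup_k|u_k|_{1^*}^{1^*}\leq Nc<S^N$, hence $\limsup_k|R_h(u_k)|_{1^*}^{1^*-1}\leq (Nc)^{1/N}<S$, so that in the rearranged Sobolev inequality $\bigl(S-|R_h(u_k)|_{1^*}^{1^*-1}\bigr)|R_h(u_k)|_{1^*}\leq f_0(R_h(u_k))-|R_h(u_k)|_{1^*}^{1^*}$ the coefficient on the left stays bounded below by $S-(Nc)^{1/N}>0$; choosing $h$ so that $f_0(R_h(u))-|R_h(u)|_{1^*}^{1^*}<\epsilon\bigl(S-(Nc)^{1/N}\bigr)$ and invoking Lemma \ref{ThRh}$(ii)$ then yields $\limsup_k|R_h(u_k)|_{1^*}<\epsilon$ directly --- an $\epsilon$-argument with no dichotomy and no Brezis--Lieb lemma. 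You instead follow the classical Br\'ezis--Nirenberg scheme: the Brezis--Lieb splitting $A=|u|_{1^*}^{1^*}+B$, the identity $c=I_\lambda(u)+\frac{1}{N}B$ with $I_\lambda(u)\geq 0$ (which does follow from Lemma \ref{ThRh}$(i)$ together with the Palais--Smale identity and compact convergence in $L^q$, recovering \eqref{eqprop41}), and the dichotomy $S B^{1/1^*}\leq B$, i.e. $B=0$ or $B\geq S^N$, obtained from the iterated limit $\lim_h\lim_k|R_h(u_k)|_{1^*}^{1^*}=B$. Your double-limit bookkeeping checks out: for fixed $h$, the strong convergence \eqref{TH2} of $T_h(u_k)$ gives $\lim_k|R_h(u_k)-R_h(u)|_{1^*}^{1^*}=B$, Brezis--Lieb for the tails plus \eqref{RH1} then identifies $\lim_k|R_h(u_k)|_{1^*}^{1^*}=B+|R_h(u)|_{1^*}^{1^*}\to B$, and $f_0(R_h(u))\to 0$ follows from $f_0(u)=f_0(T_h(u))+f_0(R_h(u))$ and lower semicontinuity, exactly as in the paper. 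What your route buys is structural information --- the weak limit is a critical point with nonnegative energy, and any failure of compactness must concentrate mass at least $S^N$ --- at the cost of the Brezis--Lieb machinery and the interchange of limits; the paper's route is shorter and more quantitative, needing only the crude bound $Nc<S^N$ and a single fixed $h$. Two small remarks: your final Radon--Riesz appeal is redundant, since $B=\lim_k|u_k-u|_{1^*}^{1^*}=0$ already \emph{is} strong convergence in $L^{1^*}(\Omega)$; and note that \eqref{eqprop41} is established inside the proof of Lemma \ref{ThRh} rather than in its statement, so in a self-contained write-up you should rederive it as indicated above rather than cite it as a standing fact.
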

\begin{proof} Let $(u_k)$ be a $(PS)_c$ sequence for $I_\lambda$ with $c < \frac{1}{N}S^{N}$. Then,
$$
I_\lambda(u_k) \to c \quad \mbox{as} \quad k \to +\infty
$$	
and
\begin{eqnarray*}
f_0(v) - f_0(u_k) & \geq & \lambda\int_{\Omega}|u_k|^{q-2}u_k(v-u_k)\,dx\\
& & +\int_{\Omega}|u_k|^{1^*-2}u_k(v-u_k)\,dx + \int_{\Omega}w_k(v-u_k)\,dx,
\end{eqnarray*}
for some $w_k \in L^{N}(\Omega)$ and $w_k = o_k(1)$ in  $L^{N}(\Omega)$. Moreover, we also have
\begin{eqnarray}
\label{eqPS1}\int_{\Omega}|D u_k|\,dx +\int_{\partial\Omega}|u_k| \,d\mathcal{H}^{N-1} & = & \lambda \int_{\Omega}|u_k|^q\,dx+\int_{\Omega}|u_k|^{1^*}\,dx\\
\nonumber& & +\int_{\Omega}w_ku_k\,dx, \quad \forall k \in \mathbb{N}.
\end{eqnarray}
Hence, from \eqref{eqPS1}, for all $k \in \mathbb{N}$,
\begin{equation}
I_\lambda(u_k)=\lambda\left(1-\frac{1}{q}\right)|u_k|_{q}^{q}+\left(1-\frac{1}{1^*}\right)|u_k|_{1^*}^{1^*}+\int_{\Omega}w_ku_k\,dx.
\label{eqPS2}
\end{equation}
Since $(u_k)$ is bounded and $w_k = o_k(1)$ in  $L^{N}(\Omega)$, \eqref{eqPS2} gives
$$
\lim_{k \to +\infty}|u_k|_{1^*}^{1^*} \leq Nc < S^N. 
$$

Since $f_0$ is lower semicontinuous and
$$
f_0(u)=f_0(T_h(u))+f_0(R_h(u)),
$$
(\ref{TH1}) and (\ref{RH1}) lead us to
$$
\limsup_{h \to +\infty}(f_0(R_h(u))-|R_h(u)|_{1^*}^{1^*}) \leq 0.
$$
Therefore, given $\epsilon>0$, there is $h>0$ large enough such that
\begin{equation} \label{S2EQ1}
f_0(R_h(u))-|R_h(u)|_{1^*}^{1^*}<\epsilon\left(S-(Nc)^{\frac{1}{N}} \right).
\end{equation}
For $h$ fixed above, the definition of $R_h$ gives
$$
\limsup_{k \to +\infty}|R_h(u_k)|_{1^*}^{1^*-1}\leq \limsup_{k \to +\infty}|u_k|_{1^*}^{1^*-1} \leq (Nc)^{\frac{1}{N}}.
$$
Now, the inequality below
$$
\left(S-|R_h(u_k)|_{1^*}^{1^*-1}\right)|R_h(u_k)|_{1^*}\leq f_0(R_h(u_k))-|R_h(u_k)|_{1^*}^{1^*}
$$
together with Lemma \ref{ThRh} and (\ref{S2EQ1}) leads to
$$
\limsup_{k \to +\infty}|R_h(u_k)|_{1^*}<\epsilon.
$$
Hence $|R_h(u)|_{1^*}<\epsilon$. Moreover, since by \eqref{TH2}, $(T_h(u_k))$ is strongly convergent to $T_h(u)$, it follows that
\begin{eqnarray*}
\limsup_{k \to +\infty}|u_k-u|_{1^*} & \leq & \limsup_{k \to +\infty}|T_h(u_k)-T_h(u)|_{1^*} \\ 
& & + \limsup_{k \to +\infty}|R_h(u_k)|_{1^*}+|R_h(u)|_{1^*}\\
& \leq & 2 \epsilon.
\end{eqnarray*}
Since that $\epsilon$ is arbitrary, the last inequality ensures that $u_k \to u$ in $L^{1^*}(\Omega)$.
\end{proof}
\begin{lemma} \label{mpg} There are $\alpha,\rho>0$ such that
$$
I_\lambda(u) \geq \alpha, \quad \mbox{for} \quad |u|_{1^*}=\rho.
$$	
\end{lemma}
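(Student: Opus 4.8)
The plan is to bound $I_\lambda$ from below near the origin by a single scalar function of $t=|u|_{1^*}$ and to analyze that function as $t\to0^+$, so that a sufficiently small sphere $\{|u|_{1^*}=\rho\}$ lies above a positive level. No compactness is needed; this is pure mountain-pass geometry.

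First I would dispose of the non-$BV$ case: if $u\in L^{1^*}(\Omega)\setminus BV(\Omega)$ then $f_0(u)=+\infty$, hence $I_\lambda(u)=+\infty$ and the claimed inequality is trivial. Thus it suffices to treat $u\in BV(\Omega)$, for which
$$
I_\lambda(u)=f_0(u)-\frac{\lambda}{q}|u|_q^q-\frac{1}{1^*}|u|_{1^*}^{1^*}.
$$
The key step is a Sobolev-type inequality relating $f_0$ to the critical norm. Exactly as in the proof of Lemma~\ref{eta}, extending $u$ by zero to $\tilde u\in BV(\mathbb{R}^N)$ gives $\int_{\mathbb{R}^N}|D\tilde u|=\int_\Omega|Du|+\int_{\partial\Omega}|u|\,d\mathcal{H}^{N-1}=f_0(u)$ together with $|\tilde u|_{L^{1^*}(\mathbb{R}^N)}=|u|_{1^*}$, so inserting $\tilde u$ into the definition (\ref{S}) of $S$ yields
$$
f_0(u)\ \geq\ S\,|u|_{1^*},\qquad \forall\, u\in BV(\Omega).
$$
This is the only place where the boundary term $\int_{\partial\Omega}|u|\,d\mathcal{H}^{N-1}$ is genuinely exploited, and identifying it with the full $\mathbb{R}^N$ total variation is the main (if mild) subtlety of the argument.

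Next I would control the subcritical term by H\"older's inequality on the bounded domain: since $q<1^*$,
$$
|u|_q^q\ \leq\ |\Omega|^{1-\frac{q}{1^*}}\,|u|_{1^*}^{q}.
$$
Combining the last two displays and writing $t=|u|_{1^*}$ and $C_\lambda=\frac{\lambda}{q}|\Omega|^{1-q/1^*}$, we obtain the pointwise lower bound
$$
I_\lambda(u)\ \geq\ \phi(t):=S\,t-C_\lambda\,t^{q}-\frac{1}{1^*}\,t^{1^*}.
$$
Finally, since $q>1$ and $1^*>1$, both $t^q$ and $t^{1^*}$ are $o(t)$ as $t\to0^+$, so $\phi(t)=St\,(1+o(1))>0$ for all sufficiently small $t>0$. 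Choosing $\rho>0$ small enough that $\phi(\rho)>0$ and setting $\alpha:=\phi(\rho)$, the estimate $I_\lambda(u)\geq\phi(|u|_{1^*})=\phi(\rho)=\alpha$ holds for every $u$ with $|u|_{1^*}=\rho$, which proves the lemma. The entire argument reduces to the elementary behaviour of $\phi$ near the origin, the only nontrivial ingredient being the inequality $f_0(u)\geq S|u|_{1^*}$.
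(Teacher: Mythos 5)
Your proof is correct and follows essentially the same route as the paper: bound $I_\lambda$ below by a scalar function of $t=|u|_{1^*}$ using a Sobolev-type inequality for the dominant term and H\"older's inequality for the subcritical term, then use $q>1$ (and $1^*>1$) to find a small sphere on which this function is positive. The only difference is cosmetic — where the paper simply invokes the continuous embedding $BV(\Omega) \hookrightarrow L^{1^*}(\Omega)$ to get the linear lower bound $C_1|u|_{1^*}$, you make the constant explicit as $S$ via the zero-extension identity $f_0(u)=\int_{\mathbb{R}^N}|D\tilde u|$, which is a valid (and slightly sharper) unfolding of the same step.
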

\begin{proof} Note that, in order to verify this lemma, it suffices to consider $u \in BV(\Omega)$, since otherwise we would have $I_\lambda(u)=+\infty$. Then, if $u \in BV(\Omega)$, from the continuous embedding  $BV(\Omega) \hookrightarrow L^{1^*}(\Omega)$ and H\"older inequality, we have that
$$
I_\lambda(u) \geq C_1|u|_{1^*}-C_2|u|_{1^*}^{q}-|u|_{1^*}^{1^*}.
$$
Since $q > 1$, the last inequality allows us to conclude that there are $\alpha,\rho>0$ such that
$$
I_\lambda(u) \geq \alpha, \quad \mbox{for} \quad |u|_{1^*}=\rho.
$$
\end{proof}

\subsection{Proof of Theorem \ref{T2}}
In what follows, we will assume that there is $d >0$ such that $I_\lambda$ has no critical point in $I_{-d}$, otherwise $I_\lambda$ has infinitly many critical points and Theorem \ref{T2} is proved.

\begin{lemma} \label{terceirag} For $n \in \mathbb{N}$ and a finite dimensional subspace $X_n \subset X$, there exists $R_n >\rho$ such that
$$
	I_\lambda|_{\partial Q_n}\leq -d \quad \mbox{where} \quad Q_n=\overline{B}_{R_n} \cap X_n.
$$
\end{lemma}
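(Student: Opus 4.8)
The plan is to exploit the fact that on a finite dimensional subspace all norms are equivalent, so that the superlinear negative term $-\frac{1}{1^*}\int_{\Omega}|u|^{1^*}\,dx$ eventually dominates the one-homogeneous term $f_0$ as $|u|_{1^*}\to +\infty$.

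The first and most delicate point is the choice of $X_n$. Since $I_\lambda(u)=+\infty$ whenever $u \in L^{1^*}(\Omega)\setminus BV(\Omega)$, the estimate $I_\lambda|_{\partial Q_n}\leq -d$ cannot hold for an arbitrary finite dimensional subspace of $X=L^{1^*}(\Omega)$; one must build $X_n$ inside the effective domain of $f_0$, that is, inside $BV(\Omega)$. To this end I would take $X_n$ to be any subspace of dimension $n$ contained in $C_c^\infty(\Omega)\subset BV(\Omega)$, which is possible because $C_c^\infty(\Omega)$ is infinite dimensional and lies in $BV(\Omega)$. On such an $X_n$ the functional $f_0$ is finite valued.

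Next I would derive a linear bound for $f_0$ on $X_n$. Restricted to $X_n$, the functional $f_0$ is finite and convex, hence continuous, and it is moreover positively homogeneous of degree one. Continuity gives boundedness on the compact unit sphere $\{u \in X_n:|u|_{1^*}=1\}$ by some constant $C_n>0$, and homogeneity then yields $f_0(u)\leq C_n|u|_{1^*}$ for all $u \in X_n$. Discarding the nonnegative term $\frac{\lambda}{q}\int_{\Omega}|u|^q\,dx\geq 0$, we obtain for every $u \in X_n$
$$
I_\lambda(u)\leq f_0(u)-\frac{1}{1^*}|u|_{1^*}^{1^*}\leq C_n|u|_{1^*}-\frac{1}{1^*}|u|_{1^*}^{1^*}.
$$
Since $1^*=\frac{N}{N-1}>1$, the scalar function $t\mapsto C_n t-\frac{1}{1^*}t^{1^*}$ diverges to $-\infty$ as $t\to +\infty$; therefore there is $R_n>\rho$ with $C_nR_n-\frac{1}{1^*}R_n^{1^*}\leq -d$. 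As $\partial Q_n=\{u \in X_n:|u|_{1^*}=R_n\}$, this gives $I_\lambda|_{\partial Q_n}\leq -d$, as desired. The only genuine obstacle is the initial constraint $X_n\subset BV(\Omega)$; once that is arranged, the remaining steps are the routine homogeneity and superlinearity estimates above.
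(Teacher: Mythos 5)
Your proposal is correct and follows essentially the same route as the paper: both choose $X_n$ inside $C_0^\infty(\Omega)\subset BV(\Omega)$, use finite-dimensionality (norm equivalence in the paper, continuity of the convex one-homogeneous $f_0$ on the unit sphere in your version --- the same fact) to get $f_0(u)\leq C_n|u|_{1^*}$, and let the superlinear term $-\frac{1}{1^*}|u|_{1^*}^{1^*}$ force $I_\lambda(u)\to-\infty$ as $|u|_{1^*}\to+\infty$ on $X_n$. Your only deviations are inessential refinements: you discard the $q$-term that the paper retains, and you make explicit the (correct) observation, implicit in the paper's choice of $X_n$, that the conclusion requires $X_n$ to lie in the effective domain $BV(\Omega)$.
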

\begin{proof} Let $X_n \subset L^{1^*}(\Omega)$ be a finite dimensional subspace, such that $X_n \subset C_0^{\infty}(\Omega)$. Since in $X_n$, all the norms are equivalent, there are positive constants $a_n, d_n$ and $b_n$ (which depend just on $n \in \mathbb{N}$), such that, for $u \in X_n$, 
$$
I_\lambda(u) \leq a_n|u|_{1^*}-d_n\lambda|u|_{1^*}^q-b_n|u|_{1^*}^{1^*}.
$$
The last inequality, in turn, implies that
$$
I_\lambda(u) \to -\infty \quad \mbox{as} \quad |u|_{1^*} \to +\infty.
$$
This proves the desired result. 	
\end{proof}
\begin{lemma} \label{estimacn}  For each $n \in \mathbb{N}$, there is $\lambda_n>0$ such that if $\lambda \geq \lambda_n$, then
$$
\sup_{u \in Q_n}I_\lambda(u) < \frac{1}{N}S^N.
$$
Hence, $c_n < \frac{1}{N}S^N$.
\end{lemma}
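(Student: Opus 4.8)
The plan is to exploit that $X_n$ is finite-dimensional and contained in $C_0^\infty(\Omega)$, so that all relevant norms restricted to $X_n$ are equivalent and $I_\lambda|_{X_n}$ can be dominated by a one-variable function of $|u|_{1^*}$ whose maximum decays like $\lambda^{-1/(q-1)}$.

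First I would reuse the estimate already established in the proof of Lemma \ref{terceirag}: for $u \in X_n$ the boundary term vanishes (functions are compactly supported) and $f_0(u)=\int_\Omega|\nabla u|\,dx$, while equivalence of norms on $X_n$ produces positive constants $a_n,d_n,b_n$ (depending only on $n$) with
$$
I_\lambda(u) \leq a_n|u|_{1^*}-\lambda d_n|u|_{1^*}^q-b_n|u|_{1^*}^{1^*} \leq a_n|u|_{1^*}-\lambda d_n|u|_{1^*}^q,
$$
the last step merely discarding the nonpositive critical term. Writing $t=|u|_{1^*}\in[0,R_n]$ on $Q_n$, this gives $\sup_{u\in Q_n}I_\lambda(u)\leq \max_{t\geq 0}\phi(t)$ with $\phi(t)=a_nt-\lambda d_nt^q$.

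Next I would carry out the elementary maximization. Since $q>1$, $\phi$ attains its maximum at $t_*=(a_n/(\lambda d_n q))^{1/(q-1)}$, and a direct computation yields
$$
\max_{t\geq 0}\phi(t)=\phi(t_*)=\frac{q-1}{q}\,a_n^{\frac{q}{q-1}}(d_n q)^{-\frac{1}{q-1}}\,\lambda^{-\frac{1}{q-1}}=:C_n\,\lambda^{-\frac{1}{q-1}}.
$$
Because $q>1$, the exponent $-1/(q-1)$ is negative, so $C_n\lambda^{-1/(q-1)}\to 0$ as $\lambda\to+\infty$, and the map $\lambda\mapsto C_n\lambda^{-1/(q-1)}$ is decreasing. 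Hence it suffices to pick $\lambda_n>0$ large enough that $C_n\lambda_n^{-1/(q-1)}<\frac{1}{N}S^N$; then for every $\lambda\geq\lambda_n$ one has $\sup_{u\in Q_n}I_\lambda(u)\leq C_n\lambda^{-1/(q-1)}<\frac{1}{N}S^N$, which is the first assertion.

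Finally, for the conclusion $c_n<\frac{1}{N}S^N$, I would check that $Q_n$ itself is an admissible competitor in the minimax class $\Lambda_n$ of Theorem \ref{AmRbthm}. The identity map lies in $\mathcal{F}$: it is odd, and since Lemma \ref{terceirag} gives $I_\lambda|_{\partial Q_n}\leq -d$, we have $\partial Q_n\subset I_{-d}$, so $\mathrm{id}|_{\partial Q_n}$ is homotopic to itself in $I_{-d}$ by the (odd) constant homotopy. Taking $V=\emptyset$, which is open, symmetric, disjoint from $\partial Q_n$, and satisfies the genus constraint vacuously, we obtain $A_0=\mathrm{id}(Q_n)=Q_n\in\Lambda'_n$; as $Q_n$ is compact and symmetric, it follows that $Q_n\in\Lambda_n$. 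Therefore
$$
c_n=\inf_{A\in\Lambda_n}\sup_{u\in A}I_\lambda(u)\leq \sup_{u\in Q_n}I_\lambda(u)<\frac{1}{N}S^N.
$$
The computations are routine; the only point demanding care is this last paragraph, namely verifying from the definitions of $\mathcal{F}$, $\Lambda'_n$ and $\Lambda_n$ that $Q_n$ (the identity together with $V=\emptyset$) genuinely belongs to $\Lambda_n$, so that $c_n$ is controlled from above by $\sup_{Q_n}I_\lambda$. I expect this bookkeeping to be the main, though minor, obstacle.
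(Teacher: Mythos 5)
Your proposal is correct and follows essentially the same route as the paper: the bound $I_\lambda(u)\leq a_n|u|_{1^*}-\lambda d_n|u|_{1^*}^q-b_n|u|_{1^*}^{1^*}$ from the proof of Lemma \ref{terceirag}, discarding the critical term, maximizing $t\mapsto a_nt-\lambda d_n t^q$ to get a bound of order $\lambda^{-1/(q-1)}$, choosing $\lambda_n$ accordingly, and concluding via $Q_n\in\Lambda_n$. Your only addition is the explicit verification (identity map in $\mathcal{F}$, $V=\emptyset$) that $Q_n\in\Lambda_n$, which the paper simply asserts; this checking is correct and harmless.
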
	
\begin{proof} Arguing as in the proof of Lemma \ref{terceirag}, we get
$$
\sup_{u \in Q_n}I_\lambda(u)\leq \sup_{u \in Q_n}\left\{a_n|u|_{1^*}-d_n\lambda|u|_{1^*}^q-b_n|u|_{1^*}^{1^*}\right \} \leq \sup_{u \in Q_n}\left\{a_n|u|_{1^*}-d_n\lambda|u|_{1^*}^q\right \}
$$
Defining the function $h:[0,+\infty)	\to \mathbb{R}$ as
$$
h(t)=a_nt-d_n\lambda t^q,
$$
it is straightforward to see that
$$
\max_{t \geq 0}h(t)=C_n\left(\frac{1}{\lambda}\right)^{\frac{1}{q-1}},
$$
for some $C_n$ which depends on $n$. Thus, there is $\lambda_n>0$ such that
$$
\max_{t \geq 0}h(t) < \frac{1}{N}S^N, \quad \forall \lambda \geq \lambda_n.
$$
This ensures that
$$
\sup_{u \in Q_n}I_\lambda(u) < \frac{1}{N}S^N, \quad \forall \lambda \geq \lambda_n.
$$
Since $Q_n \in \Lambda_n$, we have that
$$
c_n \leq \sup_{u \in Q_n}I_\lambda(u) < \frac{1}{N}S^N, \quad \forall \lambda \geq \lambda_n.
$$
\end{proof}

Therefore, taking into account Lemmas \ref{Pscondition}, \ref{mpg}, \ref{terceirag} and \ref{estimacn}, we see that $I_\lambda$ satisfies all the conditions of Theorem \ref{AmRbthm}, and so, Theorem \ref{T2} is proved.

\end{document}